\def\@tocline#1#2#3#4#5#6#7{\relax
  \ifnum #1>\c@tocdepth 
  \else
    \par \addpenalty\@secpenalty\addvspace{#2}%
    \begingroup \hyphenpenalty\@M
    \@ifempty{#4}{%
      \@tempdima\csname r@tocindent\number#1\endcsname\relax
    }{%
      \@tempdima#4\relax
    }%
    \ifnum#1=1\large\fi
    \ifnum#1=2\small\fi
    \parindent\z@ \leftskip#3\relax \advance\leftskip\@tempdima\relax
    \rightskip\@pnumwidth plus4em \parfillskip-\@pnumwidth
    #5\leavevmode\hskip-\@tempdima #6\nobreak\relax
    \hfil\hbox to\@pnumwidth{\@tocpagenum{#7}}\par
    \nobreak
    \endgroup
  \fi}
\newlength{\Width}
\newlength{\Height}  
\newlength{\Depth}
\newtheorem{thm}{Theorem}[section]
\newtheorem{lem}[thm]{Lemma}
\newtheorem{prop}[thm]{Proposition}
\newtheorem{periodthm}{Theorem}[section]
\theoremstyle{definition}
\newtheorem{rem}[thm]{Remark}
\theoremstyle{remark}
\numberwithin{equation}{section}
\def\Z{{\mathbb Z}}
\def\C{{\mathbb C}}
\def\Q{{\mathbb Q}}
\title[The period matrix of the hyperelliptic curve $w^2=z^{2g+1}-1$]
{The period matrix of the hyperelliptic curve $w^2=z^{2g+1}-1$}
\author[Yuuki Tadokoro]{Yuuki Tadokoro}
\address{Natural Science Education,
Kisarazu National College of Technology, 2-11-1 Kiyomidai-Higashi,
Kisarazu, Chiba 292-0041, Japan}
\email{tado\char`\@nebula.n.kisarazu.ac.jp}
\begin{document}

\maketitle

\begin{abstract}
A geometric algorithm is introduced for finding
a symplectic basis of the first integral homology group
of a compact Riemann surface, which is
a $p$-cyclic covering of $\C P^1$ branched over 3 points.
The algorithm yields a previously unknown symplectic basis of
the hyperelliptic curve defined by the affine equation $w^2=z^{2g+1}-1$
for genus $g\geq 2$.
We then explicitly obtain the period matrix of this curve,
its entries being elements of the $(2g+1)$-st cyclotomic field.
In the proof, the details of our algorithm play no significant role.
\end{abstract}

\section{Introduction}
Let $X$ be a compact Riemann surface of genus $g\geq 2$ or
smooth projective algebraic curve over $\C$.
The period matrix $\tau_g$ of $X$
depends only on the choice of symplectic basis
of the first integral homology group $H_1(X;\Z)$.
It is known that $\tau_g$ is symmetric and its imaginary part
is positive definite.
The Jacobian variety $J(X)$ of $X$ is defined by a complex torus
$\C^g/{(\Z^g+ \tau_g \Z^g)}$.
Torelli's theorem states that two given Riemann surfaces
$X$ and $Y$ are biholomorphic if and only if $J(X)$ and
$J(Y)$ are isomorphic as polarized abelian varieties.
It implies that $\tau_g$ determines the complex structure of $X$.
In general, calculating $\tau_g$ is not easy; 
the difficulty is in finding a symplectic basis of $H_1(X; \Z)$.
Tretkoff and Tretkoff \cite{0557.30036}
gave a method to compute $\tau_g$, using {\it Hurwitz systems}.
Andersen, Bene, and Penner \cite{1196.57013} showed a way of
finding a symplectic basis of $H_1(X; \Z)$ using {\it chord slides}
for linear chord diagrams.
By combining these two methods,
we explicitly write down a geometric algorithm for
finding a symplectic basis of the first integral homology groups
of $p$-cyclic coverings of $\C P^1$ branched over 3 points
for prime number $p\geq 5$.
We call it the {\it chord slide algorithm}.
The Frobenius method that Tretkoff and Tretkoff used is steady but
not easy to apply to compact Riemann surfaces of generic genus $g$.
We can apply the chord slide algorithm to special surfaces
of generic genus $g$ with good linear chord diagrams.
Furthermore, 
we illustrate calculating the period matrix
for this kind of Riemann surfaces.
We compute the period matrices of the hyperelliptic
and Klein quartic curves defined by the affine equations
$y^{7}=x(1-x)$ and $y^{7}=x(1-x)^2$, respectively.
These computations are already known;
see \cite{0557.30036} for the first curve and \cite{1200.14064,1038.14011,0219.30007, 0911.14021, schindler1991jacobische,1222.14058,1073.14529} for the second.

For any {\it odd} number $q\geq 5$,
let $C_{q,1}$ be the smooth projective curve over $\C$
defined by the affine equation $y^{q}=x(1-x)$.
This is biholomorphic to the hyperelliptic curve
defined by the affine equation $w^2=z^{2g+1}-1$ of genus $g=(q-1)/2$.
We can apply the chord slide algorithm to this curve $C_{q,1}$
and obtain a symplectic basis 
$\{a_i,b_i\}_{i=1,2,\ldots,g}$ of $H_1(C_{q,1}; \Z)$
different to the well-known basis in \cite{0192.58201}.
The advantage of our method is its applicability to
other curves, for example, nonhyperelliptic ones.

For generic genus, few examples of period matrices are known.
Schindler \cite{0801.14008} computed the period matrices of three types 
of hyperelliptic curves of genus $g\geq 2$.
These matrices are the only examples as far as we know.
Kamata \cite{1016.32010} introduced an algorithm for calculating those
of Fermat-type curves.
We know of no other algorithms except Kamata's and Tretkoff and Tretkoff's.
For explicit computations for cases of low genus, see \cite{0782.14026,0962.14022,0856.30031}
except for the above hyperelliptic and Klein quartic curves.
We remark that Streit \cite{1001.14011} studied the period matrices
from the viewpoint of representation theory.
Tashiro, Yamazaki, Ito, and Higuchi \cite{0948.14501}
computed the periods on $C_{q,1}$.
We obtain the period matrix $\tau_g$
of $C_{q,1}$ using the inverse of the Vandermonde matrix.
Our original contribution to the computation of $\tau_g$
is in finding the inverse matrix $\Omega_A^{-1}$, which is defined in Section \ref{Period matrices for hyperelliptic curves}.
Schindler \cite{0801.14008} obtained one for the same hyperelliptic curve
of genus $g\geq 2$ defined by the affine equation $w_1^2=z_1(z_1^{2g+1}-1)$,
which is biholomorphic to $C_{q,1}$.
This result contains a recurrence relation.
However, we have an explicit representation of $\tau_g$.
Set $\zeta=\zeta_{q}=\exp(2\pi\sqrt{-1}/{q})$.
A symplectic basis $\{A_i, B_i\}_{i=1,2,\ldots,g}$ of $H_{1}(C_{q,1};\Z)$
is defined later.
For variables $x_1,x_2,\ldots, x_n$,
we denote by $\sigma_{i}(x_1,x_2,\ldots, x_n)$ the symmetric polynomial
\[
\sum_{1\leq j_1<\cdots < j_{i}\leq n} 
x_{j_1}\cdots x_{j_i}\]
for $1\leq i\leq n$ and $\sigma_{0}(x_1,x_2,\ldots, x_n)=1$.
\begin{periodthm}
With respect
 to the symplectic basis $\{A_i, B_i\}_{i=1,2,\ldots,g}$, the period matrix $\tau_g$ of $C_{q,1}$ is
\[
\tau_g=
\left(
\sum_{k=1}^{g}
\dfrac{(-1)^{i+g}}{2g+1}(1-\zeta^{2kj})
\sigma_{g-i}(\zeta^2,\zeta^4,
\ldots,\widehat{\zeta^{2j}},\ldots,\zeta^{2g})
\prod_{m=g-k+1}^{2g-k}(1-\zeta^{2m})
\right)_{i,j},
\]
where the `hat' symbol \,$\widehat{\phantom{a}}$ over an element $\zeta^{2j}$
indicates that this element is deleted from the sequence $\zeta^2,\zeta^4,\ldots,\zeta^{2g}$.
\end{periodthm}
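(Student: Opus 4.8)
The plan is to obtain $\tau_g$ as the normalized period matrix $\Omega_A^{-1}\Omega_B$, where $\Omega_A$ and $\Omega_B$ collect the $A$- and $B$-periods of a convenient basis of holomorphic $1$-forms, and to let the cyclic symmetry of the curve do the heavy lifting. I would work on the model $w^2=z^{2g+1}-1$ with the basis $\eta_i=z^{i-1}\,dz/w$ $(1\le i\le g)$ of holomorphic differentials, and use the order-$q$ automorphism $\rho\colon(z,w)\mapsto(\zeta z,w)$, under which $\rho^{*}\eta_i=\zeta^{i}\eta_i$. Because the symplectic cycles $A_i,B_i$ are assembled from $\rho$-translates of a base cycle, every period is a power of $\zeta$ times a single transcendental ``fundamental period'' $c_i$ attached to $\eta_i$, and this same $c_i$ occurs in both the $A$- and $B$-columns. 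Writing $\Omega_A=\mathrm{diag}(c_i)\,M_A$ and $\Omega_B=\mathrm{diag}(c_i)\,M_B$ with $M_A,M_B$ having entries in $\Q(\zeta)$, the fundamental periods cancel in $\Omega_A^{-1}\Omega_B=M_A^{-1}M_B$, which is exactly why the entries of $\tau_g$ land in the cyclotomic field $\Q(\zeta)$.

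The crux --- and, as the introduction notes, the original contribution --- is the explicit inversion of $M_A$. Consecutive homology classes on a hyperelliptic curve encircle branch points two $\zeta$-steps apart, so the relevant $\rho$-eigenvalues are the powers $\zeta^{2},\zeta^{4},\dots,\zeta^{2g}$, and $M_A$ is a Vandermonde matrix in these nodes. I would invert it by Lagrange interpolation: the Lagrange basis polynomial at the node $\zeta^{2j}$ has numerator $\prod_{l\ne j}(x-\zeta^{2l})=\sum_{i}(-1)^{g-i}\sigma_{g-i}(\zeta^{2},\dots,\widehat{\zeta^{2j}},\dots,\zeta^{2g})\,x^{i-1}$, so the entries of $M_A^{-1}$ are precisely $(-1)^{g-i}\sigma_{g-i}(\zeta^{2},\dots,\widehat{\zeta^{2j}},\dots,\zeta^{2g})$ divided by the difference product $\prod_{l\ne j}(\zeta^{2j}-\zeta^{2l})$. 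This accounts at once for the elementary symmetric polynomials, the deleted node $\widehat{\zeta^{2j}}$, and the sign $(-1)^{i+g}=(-1)^{g-i}$ in the asserted formula.

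It then remains to evaluate $M_B$ and assemble the product. Each $B$-period is naturally a finite sum over the successive arcs through which the cycle $B_j$ passes; by the $\rho$-symmetry the $k$-th arc contributes a factor $\zeta^{2kj}$, and crossing the branch cuts turns these contributions into the differences $1-\zeta^{2kj}$ together with the cyclotomic products $\prod_{m=g-k+1}^{2g-k}(1-\zeta^{2m})$. This is the origin of the summation over $k$. Multiplying $M_A^{-1}$ by $M_B$, using $\zeta^{q}=1$ with $q=2g+1$ to rewrite the Vandermonde difference products $\prod_{l\ne j}(\zeta^{2j}-\zeta^{2l})$ against the arc products, and normalizing by $2g+1$, collapses everything into the single sum over $k$ displayed in the statement.

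The step I expect to be the real obstacle is not the inversion in the abstract but the cyclotomic bookkeeping that follows it: tracking the exponent shifts so that the products of root differences telescope into the clean window $\prod_{m=g-k+1}^{2g-k}(1-\zeta^{2m})$, keeping the signs and the factor $1/(2g+1)$ straight through the matrix product, and confirming at the end that the transcendental periods have cancelled exactly. As a consistency check I would verify that the resulting matrix is symmetric and has positive-definite imaginary part, the two properties any genuine period matrix must possess.
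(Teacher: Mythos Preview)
Your overall architecture matches the paper's: compute $\tau_g=\Omega_A^{-1}\Omega_B$, peel off the transcendental (beta-function) periods as a diagonal factor, recognize the remaining $A$-period matrix as a diagonal scaling of the Vandermonde matrix in the nodes $\zeta^{2},\ldots,\zeta^{2g}$, invert it via the elementary symmetric polynomials, and then simplify cyclotomically. The paper does exactly this, working on the model $y^{q}=x(1-x)$ with $\eta_i=dx/y^{q-i}$ and the automorphism $(x,y)\mapsto(x,\zeta y)$; your choice of the $w^2=z^{q}-1$ model with $z^{i-1}dz/w$ and $(z,w)\mapsto(\zeta z,w)$ is the same data under the stated biholomorphism.

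There is, however, a concrete misattribution in your third paragraph that makes the sketch internally inconsistent. The window product $\prod_{m=g-k+1}^{2g-k}(1-\zeta^{2m})$ does \emph{not} come from the $B$-periods. The $(k,j)$ entry of $M_B$ is simply the finite geometric sum $\sum_{l=0}^{j-1}\zeta^{2kl}=(1-\zeta^{2kj})/(1-\zeta^{2k})$; there are no further ``arc products'' hiding in it, and the sum over $k$ in the final formula is the summation index of the matrix product $M_A^{-1}M_B$, not a feature of $M_B$ itself. The window product arises instead from the \emph{denominator} of the Vandermonde inverse together with the stray factors $\zeta^{-k}/(1-\zeta^{2k})$: one computes
\[
\zeta^{k}(1-\zeta^{2k})\prod_{\substack{m=1\\ m\ne k}}^{g}(\zeta^{2m}-\zeta^{2k})
=(-1)^{g}\prod_{\substack{m=-k\\ m\ne 0}}^{g-k}(1-\zeta^{2m})
=(-1)^{g}\,\frac{2g+1}{\displaystyle\prod_{m=g-k+1}^{2g-k}(1-\zeta^{2m})},
\]
the last step using $\prod_{l=1}^{2g}(1-\zeta^{l})=2g+1$. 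This is precisely the telescoping you anticipate in your final paragraph, so you have the right instinct there; but as written, your account of the origin of the window product contradicts that instinct. Correcting this, your plan becomes the paper's proof.
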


\tableofcontents

\section{Algorithm}
\label{Algorithm}
We begin by writing down the geometric algorithm,
called the chord slide algorithm,
for finding a symplectic basis of the first integral homology groups
of the smooth projective algebraic curves defined by the affine equation
$y^p=x^l(1-x)^m$.
Here $p\geq 5$ is a prime number, $l, m$ are coprime, and
$1\leq l,m, l+m<p-1$.
We denote this curve by $X_{p,l,m}$.
This is a Riemann surface of genus $g=(p-1)/{2}$ and
can be considered as a $p$-sheeted cyclic coverings of $\C P^1$
branched over $\{0,1,\infty\}\subset \C P^1$.
In particular, we simply write the curve $X_{p,1,m}=C_{p,m}$ for the case $l=1$.
Throughout this section,
we work with the Klein quartic $C_{7,2}$ as an example.
Moreover, 
we detail an algorithm for calculating the period matrix
of $X_{p,l,m}$ using
holomorphic 1-forms of Bennama and Carbonne \cite{0842.14022}.
It is known that up to isomorphism there are only two curves $X_{7,l,m}$.
They are $C_{7,1}$ and $C_{7,2}$; see \cite[\S 1.3.2]{pre06000861} for example.
We next calculate their period matrices.

\subsection{Dessins d'enfants}
\label{Dessins d'enfants}
Let $X$ be a smooth projective algebraic curve over a field $k$.
We assume that $k$ is $\C$ and there exists a covering $\pi\colon X\to \C P^1$
branched over $\{0,1,\infty\}\subset \C P^1$.
The inverse image $\pi^{-1}([0,1])$ in $X$ of the unit interval in
$\C P^1$ is called a dessin d'enfants \cite{0901.14001}.
It is a topological bipartite graph illustrated on the Riemann surface $X$.
Belyi \cite{0429.12004} proved that all
algebraic curves over $\overline{\Q}$ correspond to dessins d'enfants.
The map $\pi$ is often called the Belyi map; see also \cite{1042.14016}.
In the rest of this paper, we assume 
that $X$ is $X_{p,l,m}$
and $\pi\colon X\ni (x,y)\mapsto x\in \C P^1$
is a $p$-cyclic covering
branched over $\{0,1,\infty\}\subset \C P^1$.
Set the order $p$ holomorphic automorphism $\sigma(x,y)=(x,\zeta_p y)$.
Here, we denote $\zeta_p=\exp(2\pi\sqrt{-1}/{p})$.
Let $y_0(t)$ be a real analytic function $\sqrt[p]{t^l(1-t)^m}$.
A continuous path $I_0\colon [0,1]\to X$ is defined by
the equation $I_0(t)=(t, y_0(t))\in X$ for $0\leq t\leq 1$.
We immediately obtain
$\pi(I_0)=[0,1]\subset \C P^1$ and
the dessin d'enfants
$\pi^{-1}([0,1])=\cup_{i=0}^{p-1} \sigma_{\ast}^{i}(I_0)$.
We call $\pi^{-1}(0)$ and $\pi^{-1}(1)$
the white and black vertices, respectively.
The dessin $\pi^{-1}([0,1])$ in $X$ is a bipartite graph.
Take a point $b_i$ on $\sigma_{\ast}^{i}(I_0)$ except for endpoints for each $i$.
For the Klein quartic $C_{7,2}$,
we draw a dessin d'enfants $\pi^{-1}([0,1])$; see Figure \ref{dessins-denfants}.

\begin{figure}[htbp]
\begin{center}
 \input{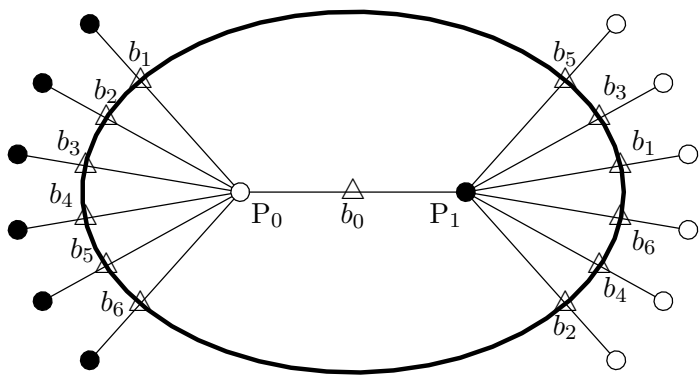}
\caption{Dessin d'enfants for the Klein quartic $C_{7,2}$
\label{dessins-denfants}}
\end{center}
\end{figure}

\subsection{Intersection numbers}
\label{Intersection numbers}
We now introduce the method of Tretkoff and Tretkoff \cite{0557.30036},
based on the Hurwitz system,
from dessins d'enfants to the intersection numbers of the loops in $X$.
Let $V$ be the set of $2g=p-1$ labeled points on the unit circle $S^1$.
A chord diagram on $V$ is a set of oriented simple chords between points of $V$.

For $i=1,2,\ldots,p-1$, let $c_i$ denote the loop
$I_0 \cdot \sigma_{\ast}^{i}(I_0)^{-1}$ in $X$.
Here, the product $I_0 \cdot \sigma_{\ast}^{i}(I_0)^{-1}$
signifies that we traverse $I_0$ first and then $\sigma_{\ast}^{i}(I_0)^{-1}$.
It follows that the dessin d'enfants $\pi^{-1}([0,1])$
equals the union $\cup_{i=0}^{p-1}c_i$.
We deform the dessin topologically
and consider $c_i$ as not loops but chords.
We get a chord diagram
and compute the intersection numbers $c_i\cdot c_j=0$ or $\pm 1$.
This satisfies the property in \cite[\S 8.1]{1214.57019}.
Let $(a_{i,j})$ denote the matrix with $(i,j)$-th entry $a_{i,j}$.
If the $2g\times 2g$ intersection matrix $(c_i\cdot c_j)$ is regular,
then $\{c_i\}_{i=1,2,\ldots,2g}$ is
a basis of the first integral homology group $H_1(X; \Z)$.

For $C_{7,2}$,
we obtain the chord diagram in Figure \ref{chord-digram}
that corresponds to Figure \ref{dessins-denfants}.
For convenience, the origin and terminal point of $c_i$
is denoted by $i$ and $\bar{i}$, respectively.
The intersection matrix is as follows \cite[pp. 482]{0557.30036}
\[
\left(
\begin{array}{cccccc}
0 &0 &1 &0 &1 &0 \\
0 &0 &1 &1 & 1&1 \\
-1 &-1 &0 &0 &1 &0 \\
0 &-1 &0 &0 &1 &1 \\
-1 &-1 &-1 &-1 &0 &0 \\
0 &-1 &0 &-1 &0 &0 \\
\end{array}
\right).
\]
We can choose from many bases of $H_1(X; \Z)$.
For example, we \cite{1222.14058} can choose the basis
$\{\ell_i\}_{i=1,2,\ldots,6}$ such that
$\ell_i=c_{i-1}\cdot c_{i}^{-1}$ for $C_{7,2}$.

\begin{figure}[htbp]
\begin{center}
 \input{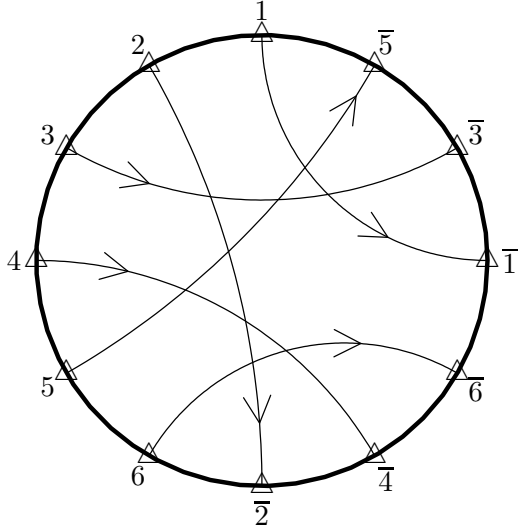}
\caption{Chord diagram of $C_{7,2}$
\label{chord-digram}}
\end{center}
\end{figure}

We write down the intersection number $c_i\cdot c_j$ for $X_{p,l,m}$.
For $1\leq i\leq p-1$, the integer $i_{l}\in \{1,2,\ldots,p-1\}$
is uniquely determined such that $i_{l}l\equiv i$ modulo $p$.
We define $i_m$ similarly.
Draw the loops $c_i$ and $c_j$.
The initial point $i$ of $c_i$ is
the $i_l$-th point moving counter-clockwise from the point $l$ and
the terminal point $\overline{i}$ is the $i_m$-th point from the point $\overline{m}$.
We give the associated chord diagram in Figure \ref{chord-digram_2}
for the case $j_l-i_l>0$ and $j_m-i_m>0$.
In this case, we have $c_i\cdot c_j=1$.
\begin{figure}[htbp]
\begin{center}
 \input{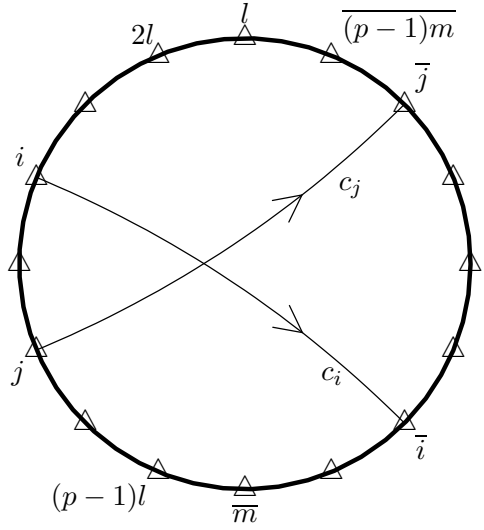}
\caption{$c_i\cdot c_j=1$ for $j_l-i_l>0$ and
$j_m-i_m>0$
\label{chord-digram_2}}
\end{center}
\end{figure}

\begin{rem}
\label{intersection numbers}
We have the intersection number
\[
c_i\cdot c_j
=\left\{
  \begin{array}{rl}
   1 & (j_l-i_l>0 \text{ and }
j_m-i_m>0),\\
  -1 & (j_l-i_l<0 \text{ and }
j_m-i_m<0),\\
   0 & (\text{otherwise}).
  \end{array}
 \right.
\]
\end{rem}
This is easily seen to be true.

\subsection{Linear chord diagrams}
We identify chord diagrams with linear chord diagrams.
A linear chord diagram in the plane with $k$
chords is defined as an interval $[0, 2k]$,
together with $k$ oriented simple arcs in the upper half plane
between the integer points $\{1,2,\ldots, 2k\}$.

Cut open a chord diagram from Section \ref{Intersection numbers}
at a certain point on $S^1$.
By identifying the $2g$ chords with loops $\{c_1,c_2,\ldots,c_{2g}\}$,
we have the corresponding linear chord diagram. 
The end points of $c_i$'s on $S^1$ determine the intersection number $c_i \cdot c_j$.
We remark that $c_i \cdot c_j$ is independent of the choice of cutting points,
but the chord slide method depends on it.

For $C_{7,2}$, choosing point $3$ in Figure \ref{chord-digram} 
produces the linear chord diagram in Figure \ref{linear-chord-diagram}.

\begin{figure}[htbp]
\begin{center}
 \input{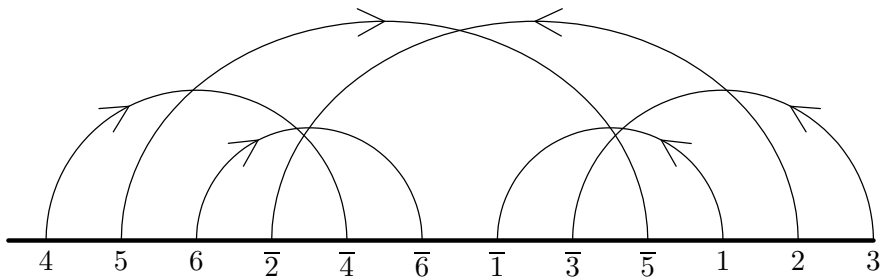}
\caption{A linear chord diagram of $C_{7,2}$
\label{linear-chord-diagram}}
\end{center}
\end{figure}

\subsection{Chord slides}
Andersen, Bene, and Penner \cite{1196.57013}
used chord slides
for the linear chord diagrams.
They studied chord slides with the Whitehead moves
on the dual of the fat graphs embedded in a surface of genus $g$
with one boundary component; see also \cite{1214.57019}.
We simply use chord slides to compute the
intersection numbers and find the matrix $T$ such that
$(c_1,c_2,\ldots,c_{2g}){}^tT$ is a symplectic matrix.

For the linear chord diagram,
we define a {\it chord slide of $c_i$ along $c_j$ for the same position}
by the transformation from $(c_1,c_2,\ldots,c_{2g})$
to $(c^{\prime}_1,c^{\prime}_2,\ldots,c^{\prime}_{2g})$ such that
\[
c_k^{\prime}=
\left\{
 \begin{array}{ll}
  c_i-c_j & (k=i),\\
  c_k      & (k\neq i).
 \end{array}
\right.
\]
as homology classes.
For the {\it opposite position},
the $c_i^{\prime}$ is replaced with $c_i+c_j$.
We define the {\it position} of a chord slide of $c_i$ along $c_j$
in the linear chord diagram.
It is the {\it same position} that
$c_i$ initially (ultimately) moves in the direction towards 
the origin (terminal) point of $c_j$.
The others are {\it opposite positions}.
We remark that the position does not depend on
the orientation of the chord; see Figures \ref{linear-chord-diagram_proof_1}
and \ref{opposite_direction}.
Using the origin and terminal points,
we simply write the chord slide in Figure \ref{linear-chord-diagram_proof_1}
and \ref{opposite_direction} as $c\to d$ and $\overline{c}\to d$, respectively.
\begin{figure}[htbp]
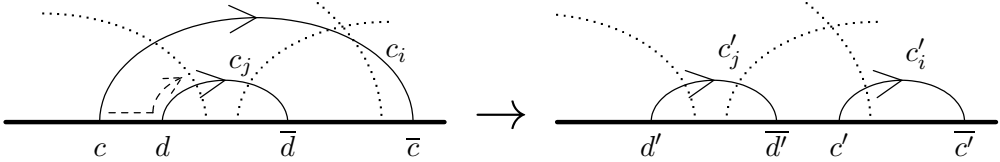

\begin{center}
 \input{linear-chord-diagram_proof_1.tex}\quad
 {\huge$\rightarrow$}\quad
 \input{linear-chord-diagram_proof_2.tex}
\caption{A chord slide of $c_i$ along $c_j$ for the same position
\label{linear-chord-diagram_proof_1}}
\end{center}
\end{figure}
\begin{figure}[htbp]
\begin{center}
 {\unitlength=1cm%
\begin{picture}%
(7.00,1.10)(-0.50,-0.10)%
\special{pn 8}%
\special{pa 1181 0}\special{pa 1180 -16}\special{pa 1178 -34}\special{pa 1174 -50}%
\special{pa 1169 -65}\special{pa 1161 -82}\special{pa 1153 -97}\special{pa 1142 -114}%
\special{pa 1130 -129}\special{pa 1118 -143}\special{pa 1103 -157}\special{pa 1088 -170}%
\special{pa 1071 -182}\special{pa 1053 -194}\special{pa 1034 -204}\special{pa 1013 -215}%
\special{pa 992 -224}\special{pa 971 -232}\special{pa 947 -240}\special{pa 924 -246}%
\special{pa 899 -252}\special{pa 874 -256}\special{pa 850 -259}\special{pa 824 -261}%
\special{pa 800 -262}\special{pa 776 -262}\special{pa 750 -261}\special{pa 725 -259}%
\special{pa 700 -256}\special{pa 674 -251}\special{pa 652 -246}\special{pa 627 -240}%
\special{pa 605 -233}\special{pa 584 -225}\special{pa 562 -215}\special{pa 543 -206}%
\special{pa 523 -194}\special{pa 505 -183}\special{pa 489 -171}\special{pa 473 -158}%
\special{pa 458 -144}\special{pa 445 -129}\special{pa 433 -114}\special{pa 423 -99}%
\special{pa 414 -83}\special{pa 407 -67}\special{pa 401 -51}\special{pa 397 -34}\special{pa 395 -18}%
\special{pa 394 0}%
\special{fp}%
\special{pa 2362 0}\special{pa 2361 -16}\special{pa 2359 -34}\special{pa 2355 -50}%
\special{pa 2350 -65}\special{pa 2342 -82}\special{pa 2334 -97}\special{pa 2323 -114}%
\special{pa 2311 -129}\special{pa 2299 -143}\special{pa 2284 -157}\special{pa 2269 -170}%
\special{pa 2253 -182}\special{pa 2234 -194}\special{pa 2215 -204}\special{pa 2194 -215}%
\special{pa 2173 -224}\special{pa 2152 -232}\special{pa 2128 -240}\special{pa 2105 -246}%
\special{pa 2080 -252}\special{pa 2055 -256}\special{pa 2031 -259}\special{pa 2005 -261}%
\special{pa 1981 -262}\special{pa 1957 -262}\special{pa 1931 -261}\special{pa 1907 -259}%
\special{pa 1881 -256}\special{pa 1856 -251}\special{pa 1833 -246}\special{pa 1808 -240}%
\special{pa 1786 -233}\special{pa 1765 -225}\special{pa 1744 -215}\special{pa 1724 -206}%
\special{pa 1704 -194}\special{pa 1686 -183}\special{pa 1670 -171}\special{pa 1654 -158}%
\special{pa 1640 -144}\special{pa 1626 -129}\special{pa 1614 -114}\special{pa 1604 -99}%
\special{pa 1595 -83}\special{pa 1588 -67}\special{pa 1582 -51}\special{pa 1578 -34}%
\special{pa 1576 -18}\special{pa 1575 0}%
\special{fp}%
\special{pa 1240 -59}\special{pa 1271 -59}\special{fp}\special{pa 1301 -59}\special{pa 1332 -59}\special{fp}%
\special{pa 1363 -59}\special{pa 1393 -59}\special{fp}\special{pa 1424 -59}\special{pa 1455 -59}\special{fp}%
\special{pa 1485 -59}\special{pa 1516 -59}\special{fp}%
\special{pa 1516 -59}\special{pa 1516 -64}\special{pa 1516 -70}\special{pa 1516 -75}\special{pa 1517 -80}\special{pa 1518 -86}\special{pa 1519 -91}\special{pa 1519 -92}\special{fp}%
\special{pa 1528 -125}\special{pa 1529 -128}\special{pa 1532 -133}\special{pa 1534 -138}\special{pa 1537 -143}\special{pa 1539 -148}\special{pa 1542 -153}\special{pa 1543 -155}\special{fp}%
\special{pa 1562 -182}\special{pa 1562 -183}\special{pa 1566 -187}\special{pa 1570 -192}\special{pa 1574 -196}\special{pa 1578 -201}\special{pa 1583 -206}\special{pa 1585 -207}\special{fp}%
\special{pa 1610 -229}\special{pa 1612 -231}\special{pa 1617 -235}\special{pa 1623 -239}\special{pa 1628 -243}\special{pa 1634 -246}\special{pa 1637 -249}\special{fp}%
\special{pa 1666 -266}\special{pa 1670 -268}\special{pa 1676 -271}\special{pa 1683 -274}\special{pa 1690 -277}\special{pa 1697 -280}\special{fp}%
\special{pa 1633 -181}\special{pa 1651 -209}\special{fp}\special{pa 1669 -237}\special{pa 1688 -266}\special{fp}%
\special{pa 1680 -279}\special{pa 1646 -277}\special{fp}\special{pa 1612 -275}\special{pa 1579 -273}\special{fp}%
\special{pn 24}%
\special{pa -197 0}\special{pa 2559 0}%
\special{fp}%
\special{pn 8}%
\special{pn 8}%
\special{pa 656 -175}\special{pa 787 -262}\special{pa 640 -317}%
\special{fp}%
\special{pn 8}%
\special{pn 8}%
\special{pa 1837 -175}\special{pa 1968 -262}\special{pa 1821 -317}%
\special{fp}%
\special{pn 8}%
\settowidth{\Width}{$c$}\setlength{\Width}{-0.5\Width}%
\settoheight{\Height}{$c$}\settodepth{\Depth}{$c$}\setlength{\Height}{\Depth}%
\put(1.0000,-0.4500){\hspace*{\Width}\raisebox{\Height}{$c$}}%
\settowidth{\Width}{$\overline{c}$}\setlength{\Width}{-0.5\Width}%
\settoheight{\Height}{$\overline{c}$}\settodepth{\Depth}{$\overline{c}$}\setlength{\Height}{\Depth}%
\put(3.0000,-0.4500){\hspace*{\Width}\raisebox{\Height}{$\overline{c}$}}%
\settowidth{\Width}{$d$}\setlength{\Width}{-0.5\Width}%
\settoheight{\Height}{$d$}\settodepth{\Depth}{$d$}\setlength{\Height}{\Depth}%
\put(4.0000,-0.4500){\hspace*{\Width}\raisebox{\Height}{$d$}}%
\settowidth{\Width}{$\overline{d}$}\setlength{\Width}{-0.5\Width}%
\settoheight{\Height}{$\overline{d}$}\settodepth{\Depth}{$\overline{d}$}\setlength{\Height}{\Depth}%
\put(6.0000,-0.4500){\hspace*{\Width}\raisebox{\Height}{$\overline{d}$}}%
\settowidth{\Width}{$c_i$}\setlength{\Width}{0.\Width}%
\settoheight{\Height}{$c_i$}\settodepth{\Depth}{$c_i$}\setlength{\Height}{\Depth}%
\put(2.0500,0.8167){\hspace*{\Width}\raisebox{\Height}{$c_i$}}%
\settowidth{\Width}{$c_j$}\setlength{\Width}{0.\Width}%
\settoheight{\Height}{$c_j$}\settodepth{\Depth}{$c_j$}\setlength{\Height}{\Depth}%
\put(5.0500,0.8167){\hspace*{\Width}\raisebox{\Height}{$c_j$}}%
\end{picture}}%
\caption{A chord slide for the opposite position
\label{opposite_direction}}
\end{center}
\end{figure}
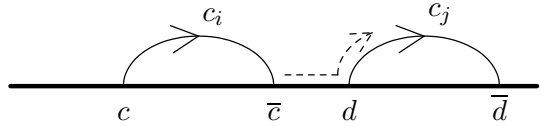

For $i\neq j$,
let $M_s(\alpha,\beta)$ and $M_o(\alpha,\beta)$ denote the 
$2g\times 2g$ matrices for which the $(i,j)$-th entries are
$-1$ and $1$ respectively for $(i,j)=(\alpha,\beta)$
and $\delta_{i,j}$ for $(i,j)\neq (\alpha,\beta)$.
Here $\delta_{i,j}$ is Kronecker's delta. 
After a chord slide of $c_i$ along $c_j$ for the same position,
we have 
\[(c^{\prime}_1,c^{\prime}_2,\ldots,c^{\prime}_{2g})
=(c_1,c_2,\ldots,c_{2g}){}^tM_s(i,j).
\]
For the opposite position, ${}^tM_s(i,j)$ may be replaced with ${}^tM_o(i,j)$.
Let $s_{i,j}(A)$ and $o_{i,j}(A)$ denote, respectively, matrices 
$M_s(i,j) A {}^tM_s(i,j) $ and $M_o(i,j) A {}^tM_o(i,j)$.
By noting the changes in the intersection numbers of the chords $(c_1,c_2,\ldots,c_{2g})$,
we have
\begin{prop}
Consider loops $\{c_k\}_{k=1,2,\ldots,2g}$ as chords in the linear chord diagram
and $A$ its intersection matrix.
If we slide $c_i$ along $c_j$ for the same position,
the intersection matrix of $\{c_k^{\prime}\}_{k=1,2,\ldots,2g}$ is $s_{i,j}(A)$.
For the opposite position, it is $o_{i,j}(A)$.
\end{prop}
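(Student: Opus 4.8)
The plan is to reduce the statement to the elementary bilinear algebra of the intersection pairing, using the homology transformation rule recorded immediately above the proposition. Write $M_s=M_s(i,j)$ for brevity. The geometric content of a same-position chord slide is already captured by the identity $(c_1',\ldots,c_{2g}')=(c_1,\ldots,c_{2g})\,{}^tM_s$ in $H_1(X;\Z)$; reading off its $k$-th entry gives $c_k'=\sum_l (M_s)_{kl}\,c_l$, so each new chord is the explicit integer combination of the old ones prescribed by the corresponding row of $M_s$.

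First I would note that the intersection form $\cdot$ on $H_1(X;\Z)$ is $\Z$-bilinear. Substituting $c_p'=\sum_l (M_s)_{pl}\,c_l$ into the pairing and expanding gives
\[
c_p'\cdot c_q' = \sum_{l,m}(M_s)_{pl}\,(c_l\cdot c_m)\,(M_s)_{qm} = \bigl(M_s\,A\,{}^tM_s\bigr)_{pq},
\]
so that the intersection matrix of $\{c_k'\}$ is exactly $s_{i,j}(A)$. Only bilinearity is used here; skew-symmetry of $A$ enters merely as the sanity check that $s_{i,j}(A)$ is again alternating, as it must be to serve as an intersection matrix. The opposite-position case is word-for-word identical with $M_o(i,j)$ in place of $M_s$, yielding $o_{i,j}(A)$.

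The one genuinely geometric point to justify, rather than assume, is that the quantity $c_p'\cdot c_q'$ read off from the endpoint pattern of the \emph{new} linear chord diagram agrees with the homological pairing of the classes $c_p',c_q'$. For this I would invoke the same principle used for the original diagram in Section \ref{Intersection numbers}: the signed count of interleaving chord endpoints computes the homology intersection number. Since a chord slide is a topological move leaving a bona fide chord diagram for the same surface, this principle persists after the slide, and the geometric intersection matrix of $\{c_k'\}$ coincides with its homological one, to which the display above applies.

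The main obstacle, and the only place where the figures do real work, is fixing the sign: one must confirm that the \emph{same position}---where $c_i$ initially or ultimately moves toward the origin or terminal endpoint of $c_j$---is precisely the configuration producing $c_i'=c_i-c_j$, hence $M_s(i,j)$, while the \emph{opposite position} produces $c_i'=c_i+c_j$, hence $M_o(i,j)$. This is a finite check against Figures \ref{linear-chord-diagram_proof_1} and \ref{opposite_direction}, independent of the chord orientations as remarked in the text. Once this correspondence between position and sign is pinned down, the conjugation formula closes both cases.
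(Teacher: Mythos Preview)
Your argument is correct and is essentially what the paper has in mind: the paper offers no proof beyond the lead-in sentence ``By noting the changes in the intersection numbers of the chords $(c_1,c_2,\ldots,c_{2g})$, we have'', and your bilinear expansion $c_p'\cdot c_q'=(M_sA\,{}^tM_s)_{pq}$ is precisely the computation that sentence is gesturing at. Your additional remark---that one must know the combinatorial chord-diagram pairing agrees with the homological one both before and after the slide---is a legitimate point the paper leaves implicit, and your appeal to Section~\ref{Intersection numbers} is the right way to close it.
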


Using this proposition, we have only to deform the intersection matrix 
into a $2g\times 2g$ symplectic matrix
$
\left(
\begin{array}{cc}
O & I_g \\
-I_g & O
\end{array}
\right)
$.
Here $I_g$ is the identity matrix of size $g$.

For the Klein quartic $C_{7,2}$,
the chord slide algorithm yields the matrices
\[s_{6,4}\circ o_{6,2}\circ o_{5,2}\circ o_{5,1}\circ s_{2,1}(A)\]
and
\[M=M_{s}(6,4)M_{o}(6,2)M_{o}(5,2)M_{s}(5,3)M_{o}(5,1)M_{s}(2,1),\]
or explicitly
\begin{center}
$\left(
 \begin{array}{cccccc}
 0 & 0 & 1 & 0 & 0 & 0 \\
 0 & 0 & 0 & 1 & 0 & 0 \\
-1 & 0 & 0 & 0 & 0 & 0 \\
 0 &-1 & 0 & 0 & 0 & 0 \\
 0 & 0 & 0 & 0 & 0 & 1 \\
 0 & 0 & 0 & 0 &-1 & 0 
 \end{array}
\right)$
and
$\left(
 \begin{array}{cccccc}
 1 & 0 & 0 & 0 & 0 & 0 \\
-1 & 1 & 0 & 0 & 0 & 0 \\
 0 & 0 & 1 & 0 & 0 & 0 \\
 0 & 0 & 0 & 1 & 0 & 0 \\
 0 & 1 &-1 & 0 & 1 & 0 \\
-1 & 1 & 0 &-1 & 0 & 1 
 \end{array}
\right),$
\end{center}
respectively.
We denote $(c_1,c_2,\ldots,c_6){}^{t}M$ by $(c^{\prime}_1,c^{\prime}_2,\ldots,c^{\prime}_6)$ such that
$c_3^{\prime}=c_5$, $c_4^{\prime}=c_3$, and $c_5^{\prime}=c_4$.
The resulting matrix is denoted by $T_{7,2}$.
We have the following matrices $T_{7,2}$ and $T_{7,2}A{}^{t}T_{7,2}$,
\[
T_{7,2}=
\left(
 \begin{array}{cccccc}
 1 & 0 & 0 & 0 & 0 & 0 \\
-1 & 1 & 0 & 0 & 0 & 0 \\
 0 & 1 &-1 & 0 & 1 & 0 \\
 0 & 0 & 1 & 0 & 0 & 0 \\
 0 & 0 & 0 & 1 & 0 & 0 \\
-1 & 1 & 0 &-1 & 0 & 1 
 \end{array}
\right),\ 
T_{7,2}A{}^{t}T_{7,2}=
\left(
 \begin{array}{cccccc}
 0 & 0 & 0 & 1 & 0 & 0 \\
 0 & 0 & 0 & 0 & 1 & 0 \\
 0 & 0 & 0 & 0 & 0 & 1 \\
-1 & 0 & 0 & 0 & 0 & 0 \\
 0 &-1 & 0 & 0 & 0 & 0 \\
 0 & 0 &-1 & 0 & 0 & 0 
 \end{array}
\right),
\]
from which we then obtain a symplectic basis
$(a_1,a_2,a_3,b_1,b_2,b_3)=(c_1,c_2,\ldots,c_6){}^{t}T_{7,2}$
of $H_1(C_{7,2};\Z)$.
The matrix $T_{7,2}$ is different from that given in \cite{0557.30036}.

\subsection{Period matrices}
\label{Period matrix}
We introduce a method for calculating the period matrix of
$X$ using holomorphic 1-forms of Bennama and Carbonne \cite{0842.14022}.
We compute those of $C_{7,1}$ and $C_{7,2}$.

Let $H^{1,0}(X)$ be the space of holomorphic 1-forms on $X$.
The floor function is denoted by $\lfloor \cdot \rfloor$.
For $n=1,2,\ldots, p-1(=2g)$, we define $\alpha_l$ and $\alpha_m$
by $\left\lfloor\dfrac{nl}{p}\right\rfloor$ and
$\left\lfloor\dfrac{nm}{p}\right\rfloor$, respectively.
Set $d_n=\left\lfloor\dfrac{n(l+m)}{p} \right\rfloor-\alpha_l -\alpha_m -1$.
Bennama and Carbonne \cite{0842.14022} derived a basis for $H^{1,0}(X)$ 
\begin{center}
$\displaystyle
\omega_{n,d}=\dfrac{x^{\alpha_l}(1-x)^{\alpha_m}x^d}{{y^n}}$
with $0\leq d\leq d_n$ and $1\leq n\leq p-1$.
\end{center}
Take a symplectic basis $\{a_i,b_i\}_{i=1,2,\ldots,g}$ of $H_{1}(X;\Z)$,
{\it i.e.}, their intersection numbers are $a_i\cdot b_j=\delta_{i,j}$
and $a_i\cdot a_j=b_i\cdot b_j=0$.
We define two $g\times g$ matrices $\Omega_A$ and $\Omega_B$ by $\left(\int_{a_j}\omega_i\right)$
and $\left(\int_{a_j}\omega_i\right)$.
It is known that the period matrix
with respect to $\{a_i,b_i\}_{i=1,2,\ldots,g}$
is obtained by $\Omega_A^{-1}\Omega_B$; 
see \cite{0758.30002}, for example.

We illustrate with curves $C_{7,1}$ and $C_{7,2}$.
For $k=1,2$, we define a basis 
$\eta_{1}^{k}, \eta_{2}^{k}, \eta_{3}^{k}$
of $H^{1,0}(C_{7,k})$ as follows:
\[
 \begin{array}{c|ccc}
  & \eta_{1}^{k} & \eta_{2}^{k} & \eta_{3}^{k}\\ \hline
  C_{7,1} & \dfrac{dx}{y^6} & \dfrac{dx}{y^5}& \dfrac{dx}{y^4} \\ [8pt] \hline
  C_{7,2} & \dfrac{(1-x)dx}{y^6} & \dfrac{(1-x)dx}{y^5}& \dfrac{dx}{y^3} 
 \end{array}\ .
\]
Let $B(u,v)$ denote the beta function $\displaystyle\int_0^1t^{u-1}
(1-t)^{v-1}dt$ for $u,v>0$. 
Put $(h_1,h_2,h_3,h_4)=(1/{7},2/{7},4/{7},1/{7})$.
From
\[\int_{I_0}\eta_{i}^{k}=
\left\{
 \begin{array}{ll}
  B(i/{7}, i/{7}) & (k=1)\\
  B(h_i, h_{i+1}) & (k=2)
 \end{array}
\right.,\]
the holomorphic 1-forms $\omega_{i}^{1}$ and $\omega_{i}^{2}$ are denoted by 
$\eta_{i}^{1}/{B(i/{7}, i/{7})}$ and $\eta_{i}^{2}/{B(h_i, h_{i+1})}$, respectively.
The equation
\[\int_{c_j}\omega_{i}^{k}=\int_{I_0}\omega_{i}^{k}-\int_{\sigma_{\ast}^{j}I_0}\omega_{i}^{k}=\int_{I_0}\omega_{i}^{k}-\int_{I_0}(\sigma^{\ast})^{j}\omega_{i}^{k}\]
yields
\begin{lem}
\label{period}
\[
\int_{c_j}\omega_{i}^{k}=
\left\{
\begin{array}{cl}
1-\zeta_7^{ij} & (k=1), \\ [6pt]
1-\zeta_7^{7h_i j} & (k=2).
\end{array}
\right.
\]
\end{lem}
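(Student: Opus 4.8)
The plan is to reduce the whole computation to finding the eigenvalue of the pullback $\sigma^*$ on each normalized holomorphic $1$-form, since the chain of equalities displayed immediately before the statement already expresses $\int_{c_j}\omega_i^k$ as $\int_{I_0}\omega_i^k-\int_{I_0}(\sigma^*)^j\omega_i^k$. The point is that every $\eta_i^k$ is a monomial of the shape $x^a(1-x)^b y^{-n}\,dx$, and because $\sigma(x,y)=(x,\zeta_7 y)$ fixes $x$ and only rescales $y$, one has $\sigma^*\bigl(x^a(1-x)^b y^{-n}\,dx\bigr)=\zeta_7^{-n}\,x^a(1-x)^b y^{-n}\,dx$. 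Thus each $\eta_i^k$, and hence each $\omega_i^k$ (which differs only by the scalar beta-function normalization), is an eigenvector of $\sigma^*$ with eigenvalue $\zeta_7^{-n}$, where $n$ is the exponent of $y$ occurring in that form. Iterating gives $(\sigma^*)^j\omega_i^k=\zeta_7^{-nj}\omega_i^k$.

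First I would read off the exponents $n$ from the explicit table. For $C_{7,1}$ the forms are $dx/y^{7-i}$ for $i=1,2,3$, so the $\sigma^*$-eigenvalue on $\omega_i^1$ is $\zeta_7^{-(7-i)}=\zeta_7^{\,i}$. For $C_{7,2}$ the forms are $(1-x)dx/y^6$, $(1-x)dx/y^5$, $dx/y^3$, with eigenvalues $\zeta_7^{-6}=\zeta_7^{1}$, $\zeta_7^{-5}=\zeta_7^{2}$, $\zeta_7^{-3}=\zeta_7^{4}$; these are precisely $\zeta_7^{7h_i}$ for $i=1,2,3$, since $7h_1=1$, $7h_2=2$, and $7h_3=4$. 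Consequently $(\sigma^*)^j\omega_i^1=\zeta_7^{\,ij}\omega_i^1$ and $(\sigma^*)^j\omega_i^2=\zeta_7^{\,7h_i j}\omega_i^2$.

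Substituting into the reduction yields $\int_{c_j}\omega_i^k=(1-\lambda_i^{\,j})\int_{I_0}\omega_i^k$, where $\lambda_i=\zeta_7^{\,i}$ in the case $k=1$ and $\lambda_i=\zeta_7^{\,7h_i}$ in the case $k=2$. The last ingredient is the normalization: by definition $\omega_i^1=\eta_i^1/B(i/7,i/7)$ and $\omega_i^2=\eta_i^2/B(h_i,h_{i+1})$, and the displayed values of $\int_{I_0}\eta_i^k$ show that $\int_{I_0}\omega_i^k=1$ in both cases, so the prefactor collapses and the two asserted formulas follow. I expect the only genuine points requiring care to be the bookkeeping of exponents modulo $7$ for $C_{7,2}$—namely checking case by case that $-n\equiv 7h_i\pmod 7$, i.e.\ that the powers $6,5,3$ match $1,2,4$ after negation mod $7$—together with the justification of $\int_{\sigma_*^j I_0}\omega=\int_{I_0}(\sigma^*)^j\omega$ by naturality of pullback along the biholomorphism $\sigma^j$, which is exactly the identity already recorded just above the statement.
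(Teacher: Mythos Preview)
Your proposal is correct and follows exactly the approach the paper indicates: the paper simply records the identity $\int_{c_j}\omega_i^k=\int_{I_0}\omega_i^k-\int_{I_0}(\sigma^*)^j\omega_i^k$ and asserts that it ``yields'' the lemma, and you have filled in precisely the eigenvalue bookkeeping and normalization that make this work. There is nothing substantively different between your argument and the paper's.
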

\begin{rem}
These integrals depend only on the cohomology class of $\omega_{i}^{k}$
and the homology class of $c_j$.
\end{rem}

For $k=1,2$, let $A_k$ and $B_k$ be the $3\times 3$ matrices
$\left(\int_{a_j}\omega_i^k\right)$ and
$\left(\int_{b_j}\omega_i^k\right)$, respectively.
We construct the $3\times 6$ matrices
\[
(A_k, B_k)=\left(\int_{c_j}\omega_{i}^{k}\right){}^{t}T_{7,k}.
\]
Here, for the case $p=7$, we denote $T_{7,1}=T$, the matrix (\ref{T_{7,1}})
in Remark \ref{row vector representation}.
We obtain the period matrix for $C_{7,k}$.
\begin{prop}
Let $\tau_{7,k}$ be the period matrix $A_k^{-1}B_k$ of $C_{7,k}$.
Then we have
\[
\tau_{7,1}=
\left(
\begin{array}{ccc}
 4+\zeta+2 \zeta^2+2 \zeta^3+\zeta^4+2 \zeta^5 & -1-2 \zeta-2 \zeta^3-2 \zeta^5 & -1+\zeta-\zeta^2 \\
 -1-2 \zeta-2 \zeta^3-2 \zeta^5 & -1+\zeta-\zeta^2-\zeta^4-\zeta^5 & 1+\zeta^3+\zeta^5 \\
 -1+\zeta-\zeta^2 & 1+\zeta^3+\zeta^5 & 1+\zeta^2
\end{array}
\right)\]
and
\[
\tau_{7,2}=
\dfrac{1}{4}
\left(
\begin{array}{ccc}
 6+3\xi & 4+2\xi & -2-\xi \\
 4+2\xi & 4+4\xi & -2\xi \\
 -2-\xi & -2\xi & 2+3\xi
\end{array}
\right),
\]
where $\zeta=\zeta_7$ and $\xi=\zeta+\zeta^2+\zeta^4=(-1+\sqrt{-7})/2$.
\end{prop}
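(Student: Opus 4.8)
The plan is to treat the statement as an explicit linear-algebra computation over the cyclotomic field $\Q(\zeta)$, $\zeta=\zeta_7$, driven entirely by Lemma~\ref{period} and the symplectic transition matrices $T_{7,k}$ already produced by the chord slide algorithm. First I would record the full $3\times 6$ period matrix $\Pi_k=\left(\int_{c_j}\omega_i^k\right)_{1\le i\le 3,\,1\le j\le 6}$. By Lemma~\ref{period} its $(i,j)$ entry is $1-\zeta^{ij}$ when $k=1$, and $1-\zeta^{7h_i j}$ when $k=2$, where $7h_1=1$, $7h_2=2$, $7h_3=4$; in particular every entry is an explicit cyclotomic integer, with no quadrature left to perform. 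The matrix $\Pi_k$ carries all the analytic information, so everything that remains is algebra.

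Next I would form $(A_k,B_k)=\Pi_k\,{}^tT_{7,k}$, reading off $A_k$ as the first three columns and $B_k$ as the last three: this is exactly the basis change from the chords $c_1,\dots,c_6$ to the symplectic basis $a_1,a_2,a_3,b_1,b_2,b_3$, and it is legitimate because periods are linear in the homology class (the Remark following Lemma~\ref{period}). For $k=1$ I would use $T_{7,1}=T$ and for $k=2$ the explicit $T_{7,2}$ displayed above. Then I would invert $A_k$ over $\Q(\zeta)$ and multiply to obtain $\tau_{7,k}=A_k^{-1}B_k$, reducing every entry modulo the relation $1+\zeta+\zeta^2+\cdots+\zeta^6=0$ so that it is expressed in the integral basis $1,\zeta,\dots,\zeta^5$. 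For $k=1$ this reduction already produces the stated matrix.

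For $k=2$ the extra step is to recognize that the entries in fact lie in the quadratic subfield $\Q(\sqrt{-7})$: after the cyclotomic reduction only the Gauss periods $\zeta+\zeta^2+\zeta^4$ and $\zeta^3+\zeta^5+\zeta^6$ survive, and substituting $\xi=\zeta+\zeta^2+\zeta^4=(-1+\sqrt{-7})/2$, together with $\xi+\bar\xi=-1$ and $\xi\bar\xi=2$, collapses each entry to the displayed form, the common denominator $4$ emerging from $\det A_2$.

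The main obstacle is the inversion of $A_k$: this is a $3\times 3$ matrix over $\Q(\zeta)$ whose determinant and adjugate must be simplified carefully using the cyclotomic relation, and tracking these reductions by hand is where errors would creep in. Two structural checks guard against this. The output must be symmetric with positive definite imaginary part (the general properties of a period matrix recalled in the introduction), and for $k=1$ the result must agree, up to the known change of symplectic basis, with the period matrix of Tretkoff and Tretkoff~\cite{0557.30036}. Verifying the symmetry of $A_k^{-1}B_k$ is, in particular, a strong consistency test on the arithmetic.
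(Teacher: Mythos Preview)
Your proposal is correct and follows essentially the same route as the paper: form $(A_k,B_k)=\left(\int_{c_j}\omega_i^k\right){}^tT_{7,k}$ from Lemma~\ref{period}, invert $A_k$ via its adjugate over $\Q(\zeta)$, and multiply out $A_k^{-1}B_k$, reducing modulo $1+\zeta+\cdots+\zeta^6=0$. The paper carries this out by displaying $A_k$, $B_k$, $\det A_k$, and $A_k^{-1}$ explicitly; your added remarks on the symmetry check and on the Gauss-period collapse to $\Q(\xi)$ for $k=2$ are apt but not substantively different.
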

\begin{proof}
By definition of $A_k$ and $B_k$, we have matrices
\begin{align*}
  A_1&=\left(
\begin{array}{ccc}
 1-\zeta & 1-\zeta+\zeta^2-\zeta^3 & 1-\zeta+\zeta^2-\zeta^3+\zeta^4-\zeta^5 \\
 1-\zeta^2 & 1-\zeta^2+\zeta^4-\zeta^6 & 1-\zeta^2+\zeta^4-\zeta^6+\zeta^8-\zeta^{10} \\
 1-\zeta^3 & 1-\zeta^3+\zeta^6-\zeta^9 & 1-\zeta^3+\zeta^6-\zeta^9+\zeta^{12}-\zeta^{15}
\end{array}
\right),\\
B_1&=\left(
\begin{array}{ccc}
 1-\zeta^2 & 1-\zeta+\zeta^2-\zeta^4 & 1-\zeta+\zeta^2-\zeta^3+\zeta^4-\zeta^6 \\
 1-\zeta^4 & 1-\zeta^2+\zeta^4-\zeta^8 & 1-\zeta^2+\zeta^4-\zeta^6+\zeta^8-\zeta^{12} \\
 1-\zeta^6 & 1-\zeta^3+\zeta^6-\zeta^{12} & 1-\zeta^3+\zeta^6-\zeta^9+\zeta^{12}-\zeta^{18}
\end{array}
\right),\\
 A_2&=
  \left(
\begin{array}{ccc}
 1-\zeta & \zeta-\zeta^2 & 1-\zeta^2+\zeta^3-\zeta^5 \\
 1-\zeta^2 & \zeta^2-\zeta^4 & 1-\zeta^4+\zeta^6-\zeta^{10} \\
 1-\zeta^4 & \zeta^4-\zeta^8 & 1-\zeta^8+\zeta^{12}-\zeta^{20}
\end{array}
\right)
,\\
 B_2&=
  \left(
\begin{array}{ccc}
 1-\zeta^3 & 1-\zeta^4 & \zeta-\zeta^2+\zeta^4-\zeta^6 \\
 1-\zeta^6 & 1-\zeta^8 & \zeta^2-\zeta^4+\zeta^8-\zeta^{12} \\
 1-\zeta^{12} & 1-\zeta^{16} & \zeta^4-\zeta^8+\zeta^{16}-\zeta^{24}
\end{array}
\right).
\end{align*}
The determinants of $A_1$ and $A_2$ are $-7(\zeta^4+\zeta^5)$
and $7(1-\xi)$, respectively.
Using the adjoint matrices of $A_1$ and $A_2$, we obtain their inverse matrices
{\scriptsize
\begin{align*}
 A_1^{-1}&=-\dfrac{1+\zeta^2+\zeta^3+\zeta^5}{7}\\
&\left(
\begin{array}{ccc}
 -\zeta+3 \zeta^2+4 \zeta^3+\zeta^4 & -1+3 \zeta-\zeta^2+2 \zeta^3-\zeta^4-2 \zeta^5 & -3-2 \zeta-2 \zeta^2-\zeta^3-4 \zeta^4-2 \zeta^5 \\
 1+4 \zeta+3 \zeta^2-\zeta^3 & 1-\zeta+3 \zeta^3+4 \zeta^5 & -\zeta+2 \zeta^2-2 \zeta^4+\zeta^5 \\
 -1-3 \zeta-4 \zeta^2-2 \zeta^3-2 \zeta^4-2 \zeta^5 & 1+2 \zeta^2-2 \zeta^3-\zeta^5 & 2+\zeta+2 \zeta^3+3 \zeta^4-\zeta^5
\end{array}
\right)
,\\
   A_2^{-1}&=\dfrac{2+\xi}{28}
\left(
\begin{array}{ccc}
 3+\zeta-\zeta^3-3 \zeta^4 & 4-2 \zeta+2 \zeta^2+\zeta^3+\zeta^4+\zeta^5 & 3-3 \zeta^2+\zeta^4-\zeta^5 \\
 -\zeta-\zeta^2-5 \zeta^3-4 \zeta^4-3 \zeta^5 & 5+\zeta+4 \zeta^2+2 \zeta^3+4 \zeta^4+5 \zeta^5 & 3+2 \zeta-\zeta^2+3 \zeta^3+2 \zeta^4-2 \zeta^5 \\
 -2 \zeta-\zeta^2+\zeta^3+2 \zeta^4 & -1+\zeta-3 \zeta^2-\zeta^3-2 \zeta^4-\zeta^5 & -\zeta+2 \zeta^2-2 \zeta^4+\zeta^5
\end{array}
\right)
.
\end{align*}
}
It suffices to calculate $A_1^{-1}B_1$ and $A_2^{-1}B_2$.
\end{proof}

\section{Two symplectic bases of one family of hyperelliptic curves}
\label{Two symplectic basis of a plain curve}
For {\it odd} integer $q\geq 5$,
let $C_{q,1}$ be a plane algebraic curve defined by the affine equation
$y^{q}=x(1-x)$.
We obtain two symplectic bases for the first integral homology group
$H_{1}(C_{q,1};\Z)$.
By substituting
$y=\sqrt[q]{\dfrac{1}{4}}\, z$ and $x=\dfrac{\sqrt{-1}\,w-1}{2}$
into the above equation, we have $w^2=z^{q}-1$,\
corresponding to a hyperelliptic curve of genus $g=(q-1)/2$.
Set the order-$q$ holomorphic automorphism
$\sigma(x,y)=(x,\zeta y)$ with $\zeta=\zeta_q=\exp(2\pi\sqrt{-1}/{q})$.
In terms of parameters $z$ and $w$, we define a loop
$\gamma_{k}\colon [0,1]\to C_{q,1}, k=0,1,\ldots, 2g,$ by
\[
\gamma_{k}(t)
=
\left\{
\begin{array}{ll}
 (\zeta^k\cdot 2t, \sqrt{-1}\sqrt{1-(2t)^p})& (0\leq t\leq 1/2),\\
 (\zeta^k(2-2t), -\sqrt{-1}\sqrt{1-(2-2t)^p})& (1/2\leq t\leq 1).
\end{array}
\right.
\]
We define the path $I_0\colon [0,1]\to C_{q,1}$
in a similar manner as in Section \ref{Dessins d'enfants}.
It is easy to prove
\begin{lem}
\label{homotopy with relative endpoints}
For $k=0,1,\ldots, 2g$, the two paths $(\sigma_{\ast})^{k}I_0$
and $\gamma_{k}$ are homotopic with relative endpoints.
\end{lem}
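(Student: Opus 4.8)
The plan is to reduce the whole family of cases to the single case $k=0$ by exploiting the $\sigma$-equivariance of both constructions, and then to recognize $I_0$ and $\gamma_0$ as two monotone lifts of the same segment lying on a single sheet of the branched covering $\pi$.

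First I would record how $\sigma$ acts in the hyperelliptic coordinates. Under the stated change of variables the automorphism $\sigma(x,y)=(x,\zeta y)$ becomes $(z,w)\mapsto(\zeta z,w)$, since $x$ (hence $w$) is unchanged while $y=\sqrt[q]{1/4}\,z$ is scaled by $\zeta$. Consequently $\gamma_k=\sigma^{k}\circ\gamma_0$ is immediate from the defining formula, and $(\sigma_{\ast})^{k}I_0=\sigma^{k}\circ I_0$ by definition. Moreover the two endpoints $P=\gamma_k(0)$ and $Q=\gamma_k(1)$ are the ramification points of $\pi$ lying over $x=0$ and $x=1$; in the $(z,w)$-model these are $(0,\sqrt{-1})$ and $(0,-\sqrt{-1})$, each with $z=0$ and therefore fixed by $z\mapsto\zeta z$. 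Hence $\sigma^{k}$ is a homeomorphism of $C_{q,1}$ fixing both $P$ and $Q$, so it carries a homotopy rel endpoints between $I_0$ and $\gamma_0$ to one between $(\sigma_{\ast})^{k}I_0$ and $\gamma_k$. This reduces the lemma to the case $k=0$.

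For $k=0$ the key point is that both paths project under $\pi$ (the $x$-coordinate) onto the real segment $[0,1]$, traversed monotonically: $\pi(I_0(t))=t$, while a short computation shows that $\pi(\gamma_0(t))$ increases from $0$ to $1/2$ on $[0,1/2]$ and from $1/2$ to $1$ on $[1/2,1]$. Over the open interval $(0,1)$, which contains no branch value of $\pi$, the covering is trivial, so $\pi^{-1}((0,1))$ is a disjoint union of $q$ open arcs (sheets). I would then check that $I_0$ and $\gamma_0$ lie on the \emph{same} sheet, namely the one on which $y=\sqrt[q]{x(1-x)}$ is the real positive $q$-th root: for $I_0$ this is built into its definition, and for $\gamma_0$ it follows because $y=\sqrt[q]{1/4}\,z$ is a positive real multiple of $z=2t>0$ (respectively $z=2-2t>0$). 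The closure of this sheet, obtained by adjoining $P$ and $Q$, is mapped homeomorphically onto $[0,1]$ by $\pi$, with $P$ and $Q$ as its two endpoints.

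It then follows that $I_0$ and $\gamma_0$ are two paths from $P$ to $Q$ inside a subspace homeomorphic to $[0,1]$; concretely $\gamma_0=I_0\circ\phi$ for the monotone reparametrization $\phi(t)=\pi(\gamma_0(t))$ fixing $0$ and $1$, and the straight-line homotopy $H(t,u)=I_0\bigl((1-u)t+u\phi(t)\bigr)$ supplies the required homotopy rel endpoints. The only step demanding real care — the place where a careless argument would go astray — is the bookkeeping of branches: one must verify that the sign choices in $w=\pm\sqrt{-1}\sqrt{1-(2t)^{q}}$ together with the coordinate change genuinely place $\gamma_0$ on the same sheet as $I_0$ and send its two endpoints to the ramification points over $x=0$ and $x=1$. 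Once this branch matching is confirmed, the homotopy is essentially automatic.
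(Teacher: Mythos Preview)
Your argument is correct. The paper does not supply a proof of this lemma at all; it merely prefaces the statement with ``It is easy to prove'' and moves on, so there is no approach in the paper to compare yours against. Your reduction to $k=0$ via $\sigma$-equivariance and the subsequent sheet-matching argument is exactly the sort of routine verification the author had in mind; the one thing to watch is that the coordinate change printed in the paper appears to contain a sign typo (as written it does not transform $y^{q}=x(1-x)$ into $w^{2}=z^{q}-1$), so when you carry out the branch bookkeeping you should use the corrected substitution, e.g.\ $x=\tfrac{1+\sqrt{-1}\,w}{2}$, under which the endpoint and sheet checks you describe go through cleanly.
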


We recall the following well-known fact (see \cite{0192.58201}, for example).
\begin{prop}
For $i=1,2,\ldots, g$, we denote $A_i=\gamma_{2i-1}\cdot \gamma_{2i}^{-1}$ and
$B_i=\gamma_{2i-1}\cdot \gamma_{2i-2}^{-1}\cdot \cdots \cdot \gamma_{1}\cdot \gamma_{0}^{-1}$.
We then have 
$\{A_i,B_i\}_{i=1,2,\ldots,g}$ is a symplectic basis of $H_{1}(C_{q,1}; \Z)$.
\end{prop}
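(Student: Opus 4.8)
The plan is to reduce the statement to the loops $c_k=I_0\cdot(\sigma_{\ast}^k I_0)^{-1}$ introduced in Section \ref{Intersection numbers}, whose pairwise intersection numbers are already recorded in Remark \ref{intersection numbers}, rather than to read the answer off the branch--point picture directly. By Lemma \ref{homotopy with relative endpoints} each translate $\sigma_{\ast}^k I_0$ is homotopic rel endpoints to $\gamma_k$, so in homology $c_k=[\gamma_0\cdot\gamma_k^{-1}]$, with $c_0=0$ because $\sigma_{\ast}^0 I_0=I_0$. For any two indices $a,b$ the concatenation $\gamma_a\cdot\gamma_b^{-1}$ is a genuine loop (both paths run from $(0,\sqrt{-1})$ to $(0,-\sqrt{-1})$), and inserting $\gamma_0^{-1}\gamma_0$ gives $[\gamma_a\cdot\gamma_b^{-1}]=[c_b]-[c_a]$. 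First I would use this to rewrite the two families: $A_i=\gamma_{2i-1}\cdot\gamma_{2i}^{-1}$ gives $[A_i]=c_{2i}-c_{2i-1}$, while grouping $B_i$ into consecutive loops $(\gamma_{2j-1}\cdot\gamma_{2j-2}^{-1})$ for $j=i,i-1,\dots,1$ gives $[B_i]=\sum_{j=1}^{i}\left(c_{2j-2}-c_{2j-1}\right)$.

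Next I would specialise the intersection formula. For $C_{q,1}=X_{q,1,1}$ we have $l=m=1$, so $i_l=i_m=i$ for every $i$, and Remark \ref{intersection numbers} collapses to $c_a\cdot c_b=\sign(b-a)$; this needs only $\gcd(l,q)=\gcd(m,q)=1$, which holds trivially, so it is valid for every odd $q\ge 5$, not merely prime ones. With this single formula the three required identities become a finite bilinear computation. One checks $A_i\cdot A_j=0$ and $B_i\cdot B_j=0$ by expanding each product of two differences into four $\sign$ terms that cancel in pairs, and for the mixed pairing $A_i\cdot B_j=\sum_{b=1}^{j}(c_{2i}-c_{2i-1})\cdot(c_{2b-2}-c_{2b-1})$ one finds that the $b$--th summand vanishes except for $b=i$ (value $+1$) and $b=i+1$ (value $-1$); hence the partial sum over $b\le j$ equals $1$ when $j=i$ and $0$ otherwise, that is $\delta_{i,j}$. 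Thus the intersection matrix of $\{A_i,B_i\}$ is the standard form $\left(\begin{smallmatrix}O&I_g\\-I_g&O\end{smallmatrix}\right)$.

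Finally I would upgrade ``correct intersection matrix'' to ``symplectic basis''. Since that matrix has determinant $1$, the classes $A_1,\dots,A_g,B_1,\dots,B_g$ are linearly independent over $\Q$, so they span a full--rank sublattice $\Lambda\subseteq H_1(C_{q,1};\Z)$. The intersection form on the $H_1$ of a closed oriented surface is unimodular, so comparing Gram determinants gives $1=\det\!\left(\begin{smallmatrix}O&I_g\\-I_g&O\end{smallmatrix}\right)=[H_1:\Lambda]^2\cdot(\pm1)$, forcing $[H_1:\Lambda]=1$ and hence $\Lambda=H_1$. Therefore $\{A_i,B_i\}$ is a $\Z$--basis carrying the symplectic intersection pattern, as claimed.

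I expect the only real obstacle to be the first step: keeping the orientation and left--to--right composition conventions straight when passing from the paths $\gamma_k$ to the homology classes of the $c_k$, in particular the bookkeeping of signs and the vanishing of $c_0$, together with the harmless but necessary check that the $b=i$ term still contributes $+1$ even in the boundary case $i=1$, where $c_{2i-2}=c_0=0$. Once the expressions $[A_i]=c_{2i}-c_{2i-1}$ and $[B_i]=\sum_{j\le i}(c_{2j-2}-c_{2j-1})$ are secured, everything downstream is a routine expansion using $c_a\cdot c_b=\sign(b-a)$.
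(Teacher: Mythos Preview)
Your argument is correct, but it follows a different route from the paper. The paper does not give a proof at all: it simply remarks that the proposition ``immediately follows from a two-sheeted covering $C_{q,1}\ni(z,w)\mapsto z\in\C P^1$ branched over the $2g+2$ points $\{1,\zeta,\ldots,\zeta^{2g},\infty\}$'' and cites Arnol'd \cite{0192.58201}; in other words, it invokes the classical branch-cut picture for hyperelliptic curves, in which the $A_i$ encircle pairs of consecutive branch points and the $B_i$ are the standard dual cycles. You instead stay inside the $q$-cyclic covering framework of Section~\ref{Algorithm}: using Lemma~\ref{homotopy with relative endpoints} to write $[A_i]=c_{2i}-c_{2i-1}$ and $[B_i]=\sum_{j\le i}(c_{2j-2}-c_{2j-1})$ (which is exactly the matrix $K$ the paper records afterwards), and then computing all pairings from $c_a\cdot c_b=\sign(b-a)$. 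Your approach has the virtue of being self-contained within the paper's own machinery and of making explicit why the result extends to non-prime odd $q$; the paper's approach is quicker but relies on an external geometric fact. The unimodularity step you use to pass from ``correct Gram matrix'' to ``$\Z$-basis'' is the right way to close the argument, and your handling of the boundary term $c_0=0$ is fine.
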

We call this basis a natural type.
Indeed, this proposition immediately follows from a two-sheeted covering
$C_{q,1}\ni (z,w)\to z\in \C P^1$ branched over the $2g+2$ points
$\{1,\zeta,\zeta^2,\ldots,\zeta^{2g},\infty\}\subset \C P^1$.
We find another symplectic basis of $H_{1}(C_{q,1}; \Z)$.
Although in general $q$ is not prime,
the chord slide algorithm can be similarly applied to $C_{q,1}$.
We recall $c_i=I_0\cdot (\sigma_{\ast})^{i}I_{0}^{-1}$.
Remark \ref{intersection numbers} gives us the intersection numbers of $c_i$'s
\[
c_i\cdot c_j=
\left\{
 \begin{array}{cl}
 1 & (i<j),\\
 0 & (i=j),\\
-1 & (i>j).
 \end{array}
\right.
\]
For $k=1,2,\ldots,g-1$ and $i=1,2,\ldots,2g$,
let $h_{i,k}$ be the composition $o_{i,2k-1}\circ s_{i,2k}$.
Moreover, we define the move $f_k$ by
\[
h_{2g,k}\circ h_{2g-1,k}\circ \cdots \circ h_{2k+2,k}\circ h_{2k+1,k}.
\]

\def\hugesymbol#1{\mbox{\strut\rlap{\smash{\LARGE$#1$}}\quad}}
\begin{lem}
\label{symplectic matrix}
Let $A=(c_i\cdot c_j)$ be the intersection matrix of $c_i$'s.
Then, the $2g\times 2g$ matrix
\[f_{g-1}\circ f_{g-2}\circ \cdots \circ f_2 \circ f_1(A)\]
is equal to the $2g\times 2g$ matrix
\[J_g=\left( \begin{array}{cccc}
    J              &     &        & \hugesymbol{O} \\
                   & J   &        &                \\
                   &     & \ddots &                \\
    \hugesymbol{O} &     &        & J
  \end{array} \right),\]
where $J=\left( \begin{array}{cc}
    0 & 1 \\  -1 & 0
   \end{array} \right)$.
\end{lem}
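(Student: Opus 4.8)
The plan is to prove the statement by induction on $g$, showing that the single move $f_1$ peels off one block $J$ and reduces the staircase intersection matrix $A$ to a staircase matrix of size $2g-2$ sitting on the remaining chords; the tail $f_{g-1}\circ\cdots\circ f_2$ then finishes the job by the inductive hypothesis. The whole point is that the moves strip off standard $2\times 2$ blocks one pair of chords at a time.

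First I would determine the net effect of the single move $f_1$. Each factor $h_{i,1}=o_{i,1}\circ s_{i,2}$ replaces $c_i$ by $c_i+c_1-c_2$ and fixes every other chord; since the reference chords $c_1$ and $c_2$ have indices below the range $i=3,\ldots,2g$ over which $f_1$ ranges, they are never themselves modified, so the individual slides act on disjoint chords and their order is immaterial. Hence $f_1$ is exactly the substitution $c_i\mapsto c_i+c_1-c_2$ for $i\ge 3$, with $c_1,c_2$ left unchanged. Using bilinearity of the intersection pairing together with the explicit values $c_i\cdot c_j=\sign(j-i)$, a short computation then yields $c_1\cdot c_2=1$, next $c_1\cdot c_i'=c_2\cdot c_i'=0$ for all $i\ge 3$, and finally $c_i'\cdot c_j'=\sign(j-i)$ for $i,j\ge 3$. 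Thus $f_1(A)$ is block diagonal, with top-left block $J$ and lower-right block the $(2g-2)\times(2g-2)$ staircase matrix carried by $c_3',\ldots,c_{2g}'$.

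Finally I would close the induction. Relabelling $c_3',\ldots,c_{2g}'$ as the chords of a genus $g-1$ problem, one checks by matching indices that $f_2,\ldots,f_{g-1}$ act on them by precisely the formulas defining $f_1,\ldots,f_{g-2}$ in that smaller problem, so the inductive hypothesis delivers $J_{g-1}$ on these indices (the base case being the size-$2$ staircase, which is already $J$). It then remains to see that these later moves do not disturb the block $J$ already produced: every slide occurring in $f_k$ alters a chord of index exceeding $2k$ by the fixed combination $c_{2k-1}-c_{2k}$, and both reference chords are, by the inductive block structure, orthogonal under the pairing to all chords lying in the previously standardised blocks; hence those blocks survive unchanged. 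Assembling the first $J$ with the inductively obtained $J_{g-1}$ gives $J_g$, as claimed.

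The main obstacle is not any single identity but the bookkeeping: carrying out the intersection-number computation of the second paragraph cleanly, and confirming that the moves at level $k$ leave intact the structure built at levels below $k$. Both hinge on the one observation that each slide only ever adds the same difference $c_{2k-1}-c_{2k}$ to higher-indexed chords, and that this difference pairs trivially with everything already placed in standard form, so the reduction at each level is genuinely independent of the earlier ones.
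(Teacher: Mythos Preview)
Your proof is correct, and while it reaches the same intermediate conclusion as the paper (namely that $f_1(A)$ is block diagonal with a $J$ block and a smaller staircase), it is organised quite differently. The paper argues geometrically: it tracks the linear chord diagram through the sequence of slides, recording the endpoint series before any moves, after $h_{3,1}$, after the full $f_1$, and at the very end, and then reads off the intersection matrix from the final picture. There is no explicit induction; the general step $f_k$ is described as a pattern of slides analogous to $f_1$, and the claim follows by inspection of the resulting diagram. You instead bypass the diagrams altogether, computing the net algebraic effect $c_i\mapsto c_i+c_1-c_2$ of $f_1$ on homology, verifying by bilinearity that the intersection matrix splits as $J\oplus(\text{smaller staircase})$, and closing with a clean induction on $g$ that also makes explicit why later moves cannot disturb the blocks already standardised. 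Your route is more self-contained and makes the recursive structure transparent; the paper's route gives immediate visual intuition for why this particular sequence of chord slides was chosen in the first place.
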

\begin{proof}
The intersection matrices of $c_i$'s in Figures \ref{Start},
\ref{second}, \ref{middle}, and \ref{Goal}
represent $A$, $h_{3,1}(A)$, $f_1(A)$, and $f_{g-1}\circ f_{g-2}\circ \cdots \circ f_2 \circ f_1(A)$ respectively.
We illustrate each move $f_i$, in particular $f_1$.
We place emphasis on the endpoint series in the linear chord diagrams.
The endpoint series in Figure \ref{Start} is 
\[1,2,3,\ldots,2g-1,2g,\overline{1},\overline{2},\overline{3},\ldots,\overline{2g-1},\overline{2g}.\]
In this figure, we first take chord slides $\overline{3}\to \overline{2}$ and $\overline{3}\to 1$.
Figure \ref{second} is obtained and
the intersection matrix becomes $h_{3,1}(A)$.
Similarly, we take chord slides
\[\overline{4}\to \overline{2}, \overline{4}\to 1,
\overline{5}\to \overline{2}, \overline{5}\to 1,\ldots,
\overline{2g}\to \overline{2}, \overline{2g}\to 1.\]
We obtain Figure \ref{middle} and the intersection matrix becomes $f_{1}(A)$.
The endpoints series in this figure is
\[1,2,3,\ldots,2g-1,2g,\overline{3},\overline{4},\overline{5},\ldots,\overline{2g-1},\overline{2g},\overline{1},\overline{2}.\]
For each $k$-th step, $k=2,3,\ldots,g-1$,
we take chord slides
\[\overline{2k+1}\to \overline{2k}, \overline{2k+1}\to 2k-1,
\overline{2k+2}\to \overline{2k}, \overline{2k+2}\to 2k-1,\ldots,
\overline{2g}\to \overline{2k}, \overline{2g}\to 2k-1.\]
This step corresponds to $f_k$.
Finally, we have Figure \ref{Goal} and
the intersection matrix becomes
$f_{g-1}\circ f_{g-2}\circ \cdots \circ f_2 \circ f_1(A)$.
The endpoint series in this figure is
\[1,2,3,\ldots,2g-1,2g,\overline{2g-1},\overline{2g},\overline{2g-3},\overline{2g-2},\ldots,\overline{3},\overline{4},\overline{1},\overline{2}.\]
Clearly, the intersection matrix $f_{g-1}\circ f_{g-2}\circ \cdots \circ f_2 \circ f_1(A)$ is equal to $J_{g}$ from Figure \ref{Goal}.
\end{proof}

\begin{figure}[htbp]
\begin{center}
\scalebox{0.7}{\input{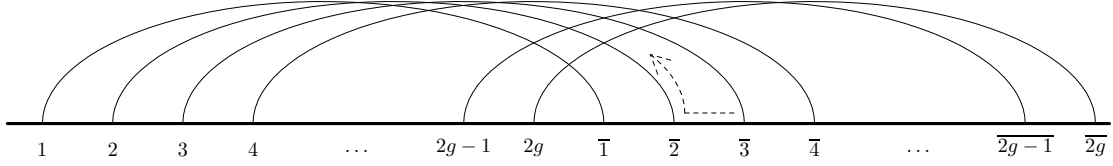}}
\caption{The initial linear chord diagram}\label{Start} 
\end{center}
\end{figure}

\begin{figure}[htbp]
\begin{center}
\scalebox{0.7}{\input{linear-chord-diagram_proof_12}}
\caption{After chord slides $\overline{3}\to \overline{2}$ and $\overline{3}\to 1$ in Figure \ref{Start}}
\label{second}
\end{center}
\end{figure}

\begin{figure}
\begin{center}
\scalebox{0.7}{\input{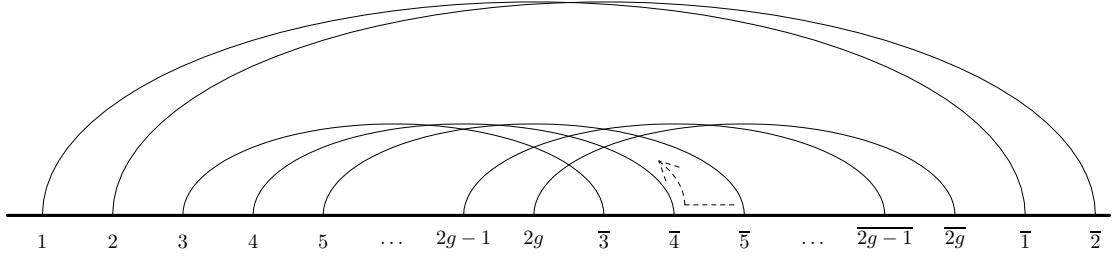}}
\caption{After the first step in Figure \ref{Start}}
\label{middle}
\end{center}
\end{figure}

\begin{figure}
\begin{center}
\scalebox{0.65}{\input{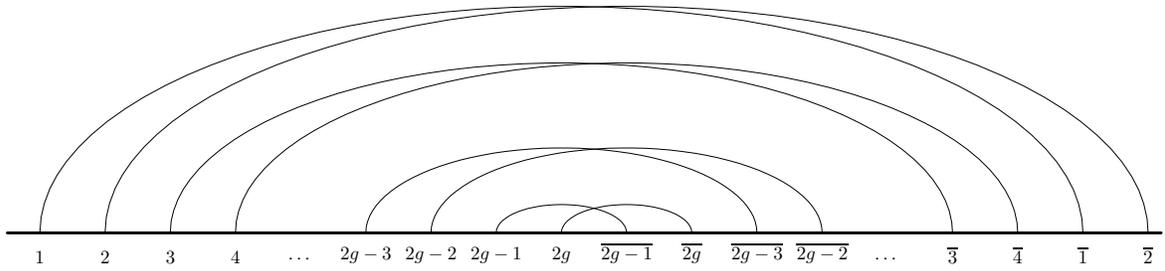}}
\caption{The final linear chord diagram}\label{Goal}
\end{center}
\end{figure}

For $g=3$, we give explicit forms for the intersection matrices
$A$, $f_1(A)$, and $f_2\circ f_1(A)$
corresponding to Figure \ref{Start}, \ref{middle}, and
\ref{Goal}, respectively.

\begin{center}
\begin{minipage}[cbt]{.4\textwidth}
\begin{center}
$\left(
\begin{array}{cccccc}
 0 & 1 & 1 & 1 & 1 & 1 \\
 -1 & 0 & 1 & 1 & 1 & 1 \\
 -1 & -1 & 0 & 1 & 1 & 1 \\
 -1 & -1 & -1 & 0 & 1 & 1 \\
 -1 & -1 & -1 & -1 & 0 & 1 \\
 -1 & -1 & -1 & -1 & -1 & 0 \\
\end{array}
\right),
$
\end{center}
\end{minipage}
\ 
\begin{minipage}[cbt]{.5\textwidth}
\begin{center}
$
\left(
\begin{array}{cccccc}
 0 & 1 & 0 & 0 & 0 & 0 \\
-1 & 0 & 0 & 0 & 0 & 0 \\
 0 & 0 & 0 & 1 & 1 & 1 \\
 0 & 0 &-1 & 0 & 1 & 1 \\
 0 & 0 &-1 &-1 & 0 & 1 \\
 0 & 0 &-1 &-1 &-1 & 0 \\
\end{array}
\right),
$
\end{center}
\end{minipage}
\end{center}
\begin{center}
and
$
\left(
\begin{array}{cccccc}
 0 & 1 & 0 & 0 & 0 & 0 \\
 -1 & 0 & 0 & 0 & 0 & 0 \\
 0 & 0 & 0 & 1 & 0 & 0 \\
 0 & 0 & -1 & 0 & 0 & 0 \\
 0 & 0 & 0 & 0 & 0 & 1 \\
 0 & 0 & 0 & 0 & -1 & 0 \\
\end{array}
\right).
$
\end{center}

\begin{rem}
We obtain the matrix $T^{\prime}$ such that
$f_{g-1}\circ f_{g-2}\circ \cdots \circ f_2 \circ f_1(A)
=T^{\prime}A{}^{t}T^{\prime}$
\[
T^{\prime}=
\left(
\begin{array}{ccccccccc}
 1 & 0 & 0 & 0 & \cdots& & 0 & 0 \\
 0 & 1 & 0 & 0 & \cdots& & 0 & 0 \\
 1 &-1 & 1 & 0 & \cdots& & 0 & 0 \\
 1 &-1 & 0 & 1 &       & & 0 & 0 \\
 1 &-1 & 1 &-1 & \ddots& & \vdots & \vdots  \\
\vdots & \vdots& \vdots& \vdots& & & &  \\
   &   &   &   &       & & 1 & 0 \\
 1 &-1 & 1 &-1 &       & & 0 & 1 
\end{array}
\right).
\]
\end{rem}

Let $c_1^{\prime},c_2^{\prime},\ldots, c_{2g}^{\prime}$ be a basis of $H_1(C_{q,1}; \Z)$ obtained by
Lemma \ref{symplectic matrix}.
Interchanging this basis gives us a symplectic basis 
\[(a_1,\ldots,a_g,b_1,\ldots, b_g)=
(c_1^{\prime},c_3^{\prime},\ldots, c_{2g-1}^{\prime},
c_2^{\prime},c_4^{\prime},\ldots, c_{2g}^{\prime}).\]

We interchange the rows of the matrix $f_{g-1}\circ f_{g-2}\circ \cdots \circ f_2 \circ f_1(A)$
in the following way.
All odd rows move to rows $1,2,\ldots, g$-th and all even rows to rows $g+1,g+2,\ldots,2g$.
The resulting matrix is denoted by $T$.
\begin{thm}
Set $(a_1,\ldots,a_g,b_1,\ldots, b_g)=(c_1,c_2,\ldots,c_{2g}){}^tT$.
Then, $\{a_i,b_i\}_{i=1,2,\ldots,g}$ is a symplectic basis of $H_1(C_{q,1}; \Z)$.
\end{thm}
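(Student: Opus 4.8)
The plan is to reduce everything to the two facts already established: Lemma \ref{symplectic matrix}, which computes the intersection matrix after all the chord slides, and the preceding remark, which records the matrix $T'$ realizing those slides as the congruence $f_{g-1}\circ\cdots\circ f_1(A)=T'A{}^tT'$. Combining these gives the single identity
\[
T'A{}^tT'=J_g .
\]
Since each elementary slide matrix $M_s(i,j)$ and $M_o(i,j)$ differs from the identity in one off-diagonal entry, each has determinant $1$, so their product $T'$ lies in $GL_{2g}(\Z)$ (equivalently, the explicit $T'$ above is lower triangular with unit diagonal). In particular $\det A=\det J_g=1$, so $A$ is regular and, as noted in Section \ref{Intersection numbers}, $\{c_i\}_{i=1,\ldots,2g}$ is a basis of $H_1(C_{q,1};\Z)$; applying the unimodular coordinate change ${}^tT'$ then shows that $(c_1',\ldots,c_{2g}')=(c_1,\ldots,c_{2g}){}^tT'$ is again a basis.

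Next I would pin down the bookkeeping that links a transformed basis to the congruence action. If $(c_1',\ldots,c_{2g}')=(c_1,\ldots,c_{2g}){}^tT'$, then $c_i'=\sum_k(T')_{ik}c_k$, whence $(c_i'\cdot c_j')=(T'A{}^tT')_{ij}$. This is exactly the translation that makes the matrix language of the chord slides agree with the homological intersection form. Consequently $J_g$ is literally the Gram matrix of the $c_i'$ for the intersection pairing, so $c_{2i-1}'\cdot c_{2i}'=1$ for each $i$ and all intersections between distinct $2\times 2$ blocks vanish.

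The remaining step is purely combinatorial. I would apply the odd-then-even reindexing that defines $T$ (moving the odd rows of $T'$ into positions $1,\ldots,g$ and the even rows into positions $g+1,\ldots,2g$), so that
\[
(a_1,\ldots,a_g,b_1,\ldots,b_g)=(c_1',c_3',\ldots,c_{2g-1}',c_2',c_4',\ldots,c_{2g}')=(c_1,\ldots,c_{2g}){}^tT .
\]
Because $T$ is a row permutation of the unimodular $T'$, it is again unimodular, so $\{a_i,b_i\}$ is a basis. Letting $\pi$ be the permutation with $\pi(i)=2i-1$ and $\pi(g+i)=2i$, the new intersection matrix is $\big((J_g)_{\pi(m),\pi(n)}\big)_{m,n}$; tracing the nonzero entries of $J_g$ shows the $1$ in block $i$ lands at $(i,g+i)$ and the $-1$ at $(g+i,i)$, giving precisely
\[
\begin{pmatrix} O & I_g \\ -I_g & O \end{pmatrix}.
\]
Equivalently $a_i\cdot b_j=\delta_{i,j}$ and $a_i\cdot a_j=b_i\cdot b_j=0$, which is the definition of a symplectic basis.

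I expect no serious obstacle: the work lies entirely in keeping the conventions straight, especially the transpose in $(c_1,\ldots,c_{2g}){}^tT$ and the rule that the $j$-th row of $T$ encodes the coefficients of the $j$-th new chord, together with the one-line verification that the odd/even reindexing sends the block form $J_g$ to the standard symplectic matrix. The only point needing mild care is the invertibility of $T'$ (hence of $T$) over $\Z$, which I would obtain either from the transvection structure of the slide matrices or from the explicit lower-triangular form displayed in the remark.
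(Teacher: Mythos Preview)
Your proposal is correct and is essentially the same argument the paper has in mind: the theorem is stated without a separate proof because it follows immediately from Lemma~\ref{symplectic matrix} together with the remark giving $T'$ (so $T'A{}^tT'=J_g$), the unimodularity of $T'$, and the odd/even row permutation sending $J_g$ to the standard symplectic form. You have simply made these implicit steps explicit, including correctly reading the paper's row-interchange as applied to $T'$ (rather than to $J_g$) to produce $T$.
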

\begin{rem}
\label{row vector representation}
The matrix $T$ takes the explicit form
\[
T=
\left(
\begin{array}{cccccccccccc}
 1 & 0 &   &   &  & \cdots  &   &  &&& 0 \\
 1 &-1 & 1 & 0 &   &  & \cdots  &  &&& 0 \\
 1 &-1 & 1 &-1 & 1 & 0 &  & \cdots &&& 0 \\
   &   &   &   &   &\vdots&   &   &   &   &   \\
 1 &-1 & 1 &-1 & 1 & -1& 1 & \cdots &-1 & 1 & 0 \\
 0 & 1 & 0 &  & & \cdots & & & & & 0 \\
 1 &-1 & 0 & 1 & 0 & & & \cdots& && 0 \\
 1 &-1 & 1 & -1& 0 & 1& 0 & & \cdots & & 0 \\
   &   &   &   &   &\vdots&   &   &   &   &   \\
 1 &-1 & 1 & -1& 1 &-1& & \cdots &  -1& 0& 1 \\
\end{array}
\right).
\]

For $g=3$, the matrix $T=T_{7,1}$ is given by
\begin{equation}
\label{T_{7,1}}
\left(
\begin{array}{cccccc}
 1 & 0 & 0 & 0 & 0 & 0 \\
 1 &-1 & 1 & 0 & 0 & 0 \\
 1 &-1 & 1 &-1 & 1 & 0 \\
 0 & 1 & 0 & 0 & 0 & 0 \\
 1 &-1 & 0 & 1 & 0 & 0 \\
 1 &-1 & 1 &-1 & 0 & 1 \\
\end{array}
\right).
\end{equation}
\end{rem}
We prove that the two symplectic bases $\{A_i,B_i\}_{i=1,2,\ldots,g}$
and $\{a_i,b_i\}_{i=1,2,\ldots,g}$ are different.
From Lemma \ref{homotopy with relative endpoints},
we have matrix $K$
\[
K=
\left(
\begin{array}{cccccccc}
-1 & 1 & 0 &   &\cdots&   & 0 \\
 0 & 0 &-1 & 1 & 0 &\cdots& 0 \\
   &   &   &\vdots&   &   &   \\
 0 &  &\cdots& & 0 &-1 & 1 \\
-1 & 0 &     &\cdots& & & 0 \\
-1 & 1 &-1 & 0 &\cdots&& 0 \\
   &   &  &\vdots  &   &   &   \\
-1 & 1 &-1 & 1 &\cdots&-1 & 0 
\end{array}
\right)
\]
such that $(A_1,\ldots,A_g,B_1,\ldots, B_g)=(c_1,c_2,\ldots,c_{2g}){}^tK$.
By comparing $T$ and $K$, we have
\begin{prop}
The two symplectic bases $\{A_i,B_i\}_{i=1,2,\ldots,g}$
and $\{a_i,b_i\}_{i=1,2,\ldots,g}$ are not equal up to ordering.
\end{prop}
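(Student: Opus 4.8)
The plan is to exploit the fact, emphasized just before the statement, that both symplectic bases are written in the single basis $\{c_1,c_2,\ldots,c_{2g}\}$ of $H_1(C_{q,1};\Z)$: the rows of $T$ are the coordinate vectors of $a_1,\ldots,a_g,b_1,\ldots,b_g$ and the rows of $K$ are those of $A_1,\ldots,A_g,B_1,\ldots,B_g$. Since $\{c_i\}$ is a basis, two elements of the two families coincide in $H_1(C_{q,1};\Z)$ if and only if their coordinate rows coincide as integer vectors. Hence ``equal up to ordering'' is equivalent to the multiset of rows of $T$ being equal to the multiset of rows of $K$, and first I would reduce the proposition to this purely matrix-theoretic comparison.

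To separate the two multisets I would introduce the augmentation homomorphism $\epsilon\colon H_1(C_{q,1};\Z)\to\Z$ determined by $\epsilon(c_k)=1$ for every $k$; on a coordinate vector it is just the sum of the entries. The key computation is to evaluate $\epsilon$ on each basis element. Reading off the explicit shape of $T$ from Remark \ref{row vector representation}, every row is an alternating string $1,-1,1,\ldots$ (with the extra $+1$ suitably placed in the $b$-block) whose entries sum to $1$, so $\epsilon(a_i)=\epsilon(b_i)=1$ for all $i$. On the other hand, the displayed form of $K$ gives $A_i=-c_{2i-1}+c_{2i}$ and $B_i=-c_{2i-1}+c_{2i-2}-\cdots-c_1$, whence $\epsilon(A_i)=0$ and $\epsilon(B_i)=-1$.

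Comparing the resulting multisets of values then finishes the argument: all $2g$ classes in $\{a_i,b_i\}$ satisfy $\epsilon=1$, whereas among the $2g$ classes in $\{A_i,B_i\}$ exactly $g$ satisfy $\epsilon=0$ and $g$ satisfy $\epsilon=-1$. In particular $K$ contributes basis classes of vanishing augmentation while $T$ contributes none, so the two multisets of rows cannot agree and the two symplectic bases are not equal up to ordering. I would also note that this in fact shows they stay distinct even if one allows reversing the orientation of individual cycles, since $\epsilon=0$ is preserved under sign change but never occurs for a row of $T$.

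I do not expect a genuine obstacle here; the only real content is the choice of invariant. The main step is recognizing that the coordinate sum (the augmentation relative to the $c$-basis) already distinguishes the two families, after which verifying that all rows of $T$ sum to $1$ is a routine reading of the alternating patterns in Remark \ref{row vector representation}, and the values $\epsilon(A_i)=0$, $\epsilon(B_i)=-1$ follow immediately from the displayed matrix $K$ (equivalently from $A_i=\gamma_{2i-1}\cdot\gamma_{2i}^{-1}$, $B_i=\gamma_{2i-1}\cdot\gamma_{2i-2}^{-1}\cdots\gamma_1\cdot\gamma_0^{-1}$ together with $c_k=\gamma_0\cdot\gamma_k^{-1}$).
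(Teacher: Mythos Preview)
Your proposal is correct and follows the same route the paper indicates: the paper's entire proof is the single sentence ``By comparing $T$ and $K$, we have \ldots'', so any correct argument amounts to exhibiting a concrete difference between the row-multisets of $T$ and $K$. Your choice of the augmentation $\epsilon(c_k)=1$ is a clean way to carry this out---all rows of $T$ have $\epsilon=1$ while those of $K$ have $\epsilon\in\{0,-1\}$---and it even yields the slightly stronger conclusion that the bases differ up to ordering \emph{and sign}, which the paper does not state.
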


\section{Period matrices for hyperelliptic curves}
\label{Period matrices for hyperelliptic curves}
Let $\tau_g$ and $\tau_g^{\mathrm{CS}}$ denote the period matrices
of $C_{q,1}$ with respect to symplectic bases $\{A_i,B_i\}_{i=1,2,\ldots,g}$ and $\{a_i,b_i\}_{i=1,2,\ldots,g}$ of $H_1(C_{q,1};\Z)$. 
We compute $\tau_g$ of $C_{q,1}$.
Moreover, we obtain the relations
among $\tau_g$, $\tau_g^{\mathrm{CS}}$, and Schindler's period matrices \cite{0801.14008}.
Set $\eta_i=\dfrac{dx}{y^{q-i}}$ for $i=1,2,\ldots,g$.
Bennama \cite{0931.14018} proved that $\{\eta_i\}_{i=1,2,\ldots,g}$
is a basis of $H^{1,0}(C_{q,1})$.
From Section \ref{Period matrix}, we have for the period of $\eta_i$ along $c_j$
\[
\int_{c_j}\eta_i=(1-\zeta^{ij})B(i/q,i/q).
\]
For simplicity, we denote $\omega_i=\eta_i/{B(i/q,i/q)}$.
We recall the two $g\times g$ matrices $\Omega_A$ and $\Omega_B$ which
are $\left(\int_{A_j}\omega_i\right)_{i,j}$
and $\left(\int_{B_j}\omega_i\right)$, respectively.
We form the $g\times 2g$ matrix,
\[(\Omega_A, \Omega_B)=\left(\int_{c_j}\omega_i\right){}^{t}K.\]
We have the periods of the matrices $\Omega_A$ and $\Omega_B$, for which
Tashiro, Yamazaki, Ito, and Higuchi \cite{0948.14501} obtained the same result.
\begin{prop}
We have
\begin{align*}
\Omega_A&=
-\mathop{\mathrm{diag}}\left(-1+\zeta^{i}\right)
\mathop{\mathrm{diag}}\left(\zeta^{i}\right)
\left(\zeta^{2i(j-1)}\right) {\text and}\\
\Omega_B&=\mathop{\mathrm{diag}}\left(-1+\zeta^{i}\right)
\left(\sum_{k=0}^{j-1}\zeta^{2ik}\right),
\end{align*}
where $\mathrm{diag}(a_{i})$ is the diagonal matrix with $(i,i)$-th entry $a_i$.
\end{prop}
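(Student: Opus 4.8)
The plan is to read both matrices directly off the factorization $(\Omega_A,\Omega_B)=\left(\int_{c_j}\omega_i\right){}^{t}K$, combining the explicit entries of the matrix $K$ displayed above with the period formula $\int_{c_j}\omega_i=1-\zeta^{ij}$ that follows from $\int_{c_j}\eta_i=(1-\zeta^{ij})B(i/q,i/q)$ and $\omega_i=\eta_i/B(i/q,i/q)$. Write $M=\left(1-\zeta^{ij}\right)_{i,j}$ for the $g\times 2g$ matrix of these periods; then the $(i,l)$-entry of $M\,{}^{t}K$ is $\sum_{j}K_{l,j}\,M_{i,j}$, a signed sum of the numbers $1-\zeta^{ij}$ prescribed by the $l$-th row of $K$. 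The rows of $K$ expressing $A_l$ and $B_m$ in the $c_j$ come from $A_l=\gamma_{2l-1}\cdot\gamma_{2l}^{-1}$ and $B_m=\gamma_{2m-1}\cdot\gamma_{2m-2}^{-1}\cdots\gamma_{1}\cdot\gamma_{0}^{-1}$ via Lemma~\ref{homotopy with relative endpoints} and $c_i=I_0\cdot(\sigma_{\ast})^{i}I_0^{-1}$, so the whole proposition reduces to two short algebraic identities.

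For the first $g$ columns, the $l$-th row of $K$ has only the nonzero entries $-1$ and $+1$ in columns $2l-1$ and $2l$. Hence
\[
(\Omega_A)_{i,l}=-\left(1-\zeta^{i(2l-1)}\right)+\left(1-\zeta^{2il}\right)=\zeta^{2il-i}-\zeta^{2il}=(1-\zeta^{i})\,\zeta^{i}\,\zeta^{2i(l-1)},
\]
which is precisely the claimed entry $-(-1+\zeta^{i})\,\zeta^{i}\,\zeta^{2i(l-1)}$ of $\Omega_A$. This disposes of the first formula at once.

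For the last $g$ columns, the $m$-th such row of $K$ carries the alternating coefficients $(-1)^{j}$ in columns $j=1,2,\ldots,2m-1$ and zeros thereafter, so
\[
(\Omega_B)_{i,m}=\sum_{j=1}^{2m-1}(-1)^{j}\left(1-\zeta^{ij}\right)=\sum_{j=1}^{2m-1}(-1)^{j}-\sum_{j=1}^{2m-1}\left(-\zeta^{i}\right)^{j}.
\]
The first sum equals $-1$, being an alternating sum of an odd number of terms, and the second is a finite geometric series; summing them yields
\[
(\Omega_B)_{i,m}=\frac{1-\zeta^{2im}}{-\zeta^{i}-1}=\frac{\zeta^{2im}-1}{\zeta^{i}+1}.
\]
Factoring $\zeta^{2i}-1=(\zeta^{i}-1)(\zeta^{i}+1)$ and using $\sum_{k=0}^{m-1}\zeta^{2ik}=(\zeta^{2im}-1)/(\zeta^{2i}-1)$ rewrites this as $(-1+\zeta^{i})\sum_{k=0}^{m-1}\zeta^{2ik}$, the desired $(i,m)$-entry of $\Omega_B$.

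There is no conceptual obstacle here: once the sign patterns in the rows of $K$ are transcribed correctly, both identities are immediate. The only points demanding care are the bookkeeping of indices in the $B$-rows and the geometric-series simplification for $\Omega_B$, in particular the cancellation of the factor $\zeta^{i}-1$ against $\zeta^{2i}-1$; I would carry out that step last, after the $\Omega_A$ computation has fixed the conventions.
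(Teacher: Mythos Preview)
Your proof is correct and follows essentially the same route as the paper: both compute the entries of $\Omega_A$ and $\Omega_B$ directly from the rows of $K$ and the period formula $\int_{c_j}\omega_i=1-\zeta^{ij}$, then simplify via the factorization $\zeta^{i(2l-1)}-\zeta^{2il}=-(-1+\zeta^i)\zeta^i\zeta^{2i(l-1)}$ and a geometric-series identity for the alternating sum. You are slightly more explicit than the paper about extracting the starting expressions from $K$, but the algebra thereafter is identical.
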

\begin{proof}
From the definition of $\Omega_A$ and $\Omega_B$, we obtain
$\Omega_A=\left(\zeta^{i(2j-1)}-\zeta^{2ij}\right)$ and 
$\displaystyle \Omega_B=\left(\sum_{k=0}^{2j-1}(-1)^{k+1}\zeta^{ik}\right)$.
We have only to compute the $(i,j)$-th entries of $\Omega_A$ and $\Omega_B$
\begin{align*}
\zeta^{i(2j-1)}-\zeta^{2ij}
&=-\left(-1+\zeta^{i}\right)\zeta^{i}\zeta^{2i(j-1)},\\
\sum_{k=0}^{2j-1}(-1)^{k+1}\zeta^{ik}
&=-\dfrac{1-\zeta^{2ij}}{1+\zeta^{i}}
 =\left(-1+\zeta^{i}\right)\dfrac{1-\zeta^{2ij}}{1-\zeta^{2i}}
 =\left(-1+\zeta^{i}\right)\sum_{k=0}^{j-1}\zeta^{2ik}.
\end{align*}
\end{proof}
\begin{rem}
The matrix $\left(\zeta^{2i(j-1)}\right)$ is 
a Vandermonde matrix.
\end{rem}

To compute the matrix $\tau_g$ from $\Omega_A^{-1}\Omega_B$, we introduce a lemma.
For variables $x_1,x_2,\ldots, x_n$,
we denote by $\sigma_{i}(x_1,x_2,\ldots, x_n)$ the symmetric polynomial
\[
\sum_{1\leq j_1<\cdots < j_{i}\leq n} 
x_{j_1}\cdots x_{j_i}\]
for $1\leq i\leq n$ and $\sigma_{0}(x_1,x_2,\ldots, x_n)=1$.
Knuth \cite[Excercise 40 in \S 1.2.3]{0191.17903}
derived the inverse matrix of a Vandermonde matrix.
\begin{lem}
Let $a_1,a_2,\ldots,a_n$ be distinct complex constants.
We denote the Vandermonde matrix of size $n$ by
$V_{n}=\left(a_{i}^{j-1}\right)$.
Its inverse matrix is then
\[
V_{n}^{-1}=
\left((-1)^{i-1}\dfrac{\sigma_{n-i}(a_1,\ldots,\widehat{a_{j}},\ldots,a_{n})}
{\prod_{m=1, m\neq j}^{n}(a_m-a_j)}\right).
\]
The `hat' symbol is as defined earlier.
\end{lem}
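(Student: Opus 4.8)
The plan is to prove the lemma by polynomial (Lagrange) interpolation, which identifies $V_n$ with the linear map sending the coefficient vector of a polynomial of degree $<n$ to its vector of values at the nodes $a_1,\ldots,a_n$. Concretely, if $p(x)=\sum_{j=1}^{n}c_j x^{j-1}$, then $\bigl(p(a_1),\ldots,p(a_n)\bigr)^t = V_n\,(c_1,\ldots,c_n)^t$, so $V_n^{-1}$ is exactly the map recovering the coefficients $c_i$ from the values $p(a_j)$. Hence the $(i,j)$-th entry of $V_n^{-1}$ is the coefficient of $x^{i-1}$ in the $j$-th Lagrange basis polynomial
\[
\ell_j(x)=\prod_{m=1,\,m\neq j}^{n}\frac{x-a_m}{a_j-a_m},
\]
since the $\ell_j$ satisfy $\ell_j(a_k)=\delta_{j,k}$ and therefore reproduce any prescribed value vector.

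First I would expand the numerator $\prod_{m\neq j}(x-a_m)$ in powers of $x$ using the elementary symmetric functions of the $n-1$ remaining variables, obtaining
\[
\prod_{m=1,\,m\neq j}^{n}(x-a_m)
=\sum_{k=0}^{n-1}(-1)^{\,n-1-k}\,\sigma_{n-1-k}(a_1,\ldots,\widehat{a_j},\ldots,a_n)\,x^{k}.
\]
Reading off the coefficient of $x^{i-1}$, i.e.\ setting $k=i-1$, yields the numerator $(-1)^{\,n-i}\sigma_{n-i}(a_1,\ldots,\widehat{a_j},\ldots,a_n)$, divided by the constant $\prod_{m\neq j}(a_j-a_m)$. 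Thus
\[
(V_n^{-1})_{i,j}
=\frac{(-1)^{\,n-i}\,\sigma_{n-i}(a_1,\ldots,\widehat{a_j},\ldots,a_n)}{\prod_{m=1,\,m\neq j}^{n}(a_j-a_m)}.
\]

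The remaining step is sign bookkeeping to match the stated form. Since $\prod_{m\neq j}(a_m-a_j)=(-1)^{\,n-1}\prod_{m\neq j}(a_j-a_m)$ (there are $n-1$ factors), rewriting the denominator accordingly turns the factor $(-1)^{\,n-i}$ into $(-1)^{\,n-i-(n-1)}=(-1)^{\,1-i}=(-1)^{\,i-1}$ and replaces the denominator by $\prod_{m\neq j}(a_m-a_j)$, which is precisely the asserted formula. I expect the only genuine obstacle to be this reconciliation of the two sign conventions together with the off-by-one in the index of $\sigma$; everything else is standard interpolation theory. Alternatively, one may bypass interpolation and verify $V_nV_n^{-1}=I$ directly, computing $(V_nV_n^{-1})_{i,k}=\sum_{j}a_i^{\,j-1}(V_n^{-1})_{j,k}=\ell_k(a_i)=\delta_{i,k}$, so that again no deeper input than the defining property of the Lagrange basis is required.
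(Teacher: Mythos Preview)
Your argument is correct. The identification of $V_n$ with the evaluation map on polynomials of degree $<n$, the expansion of the Lagrange numerator via elementary symmetric polynomials, and the sign reconciliation between $\prod_{m\neq j}(a_j-a_m)$ and $\prod_{m\neq j}(a_m-a_j)$ are all carried out accurately; in particular $(-1)^{n-i}\cdot(-1)^{-(n-1)}=(-1)^{1-i}=(-1)^{i-1}$ as you wrote.

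As for comparison with the paper: the paper does not supply its own proof of this lemma. It simply attributes the formula to Knuth \cite[Exercise~40 in \S1.2.3]{0191.17903} and states the result. Your Lagrange-interpolation proof is the standard one and is exactly the argument one would expect behind that citation, so there is no substantive difference in approach to discuss.
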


The above proposition and lemma give 
\begin{thm}
The period matrix $\tau_g$ of $C_{q,1}$ with respect to the symplectic basis $\{A_i, B_i\}_{i=1,2,\ldots,g}$ is expressible as
\[
\tau_g=
\left(
\sum_{k=1}^{g}
\dfrac{(-1)^{i+g}}{2g+1}(1-\zeta^{2kj})
\sigma_{g-i}(\zeta^2,\zeta^4,
\ldots,\widehat{\zeta^{2j}},\ldots,\zeta^{2g})
\prod_{m=g-k+1}^{2g-k}(1-\zeta^{2m})
\right).
\]
\end{thm}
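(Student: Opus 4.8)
The plan is to start from the identity $\tau_g=\Omega_A^{-1}\Omega_B$ and substitute the closed forms for $\Omega_A$ and $\Omega_B$ supplied by the preceding proposition. Writing $D_1=\mathrm{diag}(-1+\zeta^i)$, $D_2=\mathrm{diag}(\zeta^i)$, and letting $V=(\zeta^{2i(j-1)})$ be the Vandermonde matrix in the nodes $\zeta^{2i}$, the proposition reads $\Omega_A=-D_1D_2V$ and $\Omega_B=D_1\bigl(\sum_{l=0}^{j-1}\zeta^{2il}\bigr)$. Hence $\Omega_A^{-1}=-V^{-1}D_2^{-1}D_1^{-1}$, and the two copies of $D_1$ cancel, leaving
\[
\tau_g=-V^{-1}D_2^{-1}\Bigl(\sum_{l=0}^{j-1}\zeta^{2il}\Bigr).
\]
This cancellation is the first simplification that makes the computation tractable: only the Vandermonde factor of $\Omega_A$ must be inverted.

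First I would invert $V$ using the Vandermonde inversion lemma of Knuth with $a_i=\zeta^{2i}$, so that the $(i,k)$-entry of $V^{-1}$ is $(-1)^{i-1}\sigma_{g-i}(\ldots,\widehat{\zeta^{2k}},\ldots)/\prod_{m\neq k}(\zeta^{2m}-\zeta^{2k})$. Carrying out the matrix product and using $\sum_{l=0}^{j-1}\zeta^{2kl}=(1-\zeta^{2kj})/(1-\zeta^{2k})$, the $(i,j)$-entry becomes a single sum over the contraction index $k$,
\[
(\tau_g)_{i,j}=\sum_{k=1}^{g}(-1)^{i}\,\sigma_{g-i}(\ldots,\widehat{\zeta^{2k}},\ldots)\,(1-\zeta^{2kj})\,
\frac{\zeta^{-k}}{(1-\zeta^{2k})\prod_{m=1,\,m\neq k}^{g}(\zeta^{2m}-\zeta^{2k})},
\]
where the deleted node is the one indexed by the summation variable.

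The heart of the argument, and the step I expect to be the main obstacle, is to show that the scalar prefactor collapses to the clean cyclotomic product in the statement, namely
\[
\frac{\zeta^{-k}}{(1-\zeta^{2k})\prod_{m\neq k}(\zeta^{2m}-\zeta^{2k})}
=\frac{(-1)^{g}}{2g+1}\prod_{m=g-k+1}^{2g-k}(1-\zeta^{2m}).
\]
To establish this I would exploit that $q=2g+1$ is odd, so $\{\zeta^{2m}\}_{m=0}^{2g}$ runs over all $q$-th roots of unity. The standard evaluation $\prod_{\omega^{q}=1,\,\omega\neq\alpha}(\alpha-\omega)=q\alpha^{q-1}$ applied at $\alpha=\zeta^{2k}$ gives $\prod_{m=0,\,m\neq k}^{2g}(\zeta^{2k}-\zeta^{2m})=(2g+1)\zeta^{-2k}$, which supplies both the factor $1/(2g+1)$ and, after splitting off the $m=0$ factor $\zeta^{2k}-1$ and the factors with $g<m\le 2g$, converts the partial Vandermonde product over $1\le m\le g$ into the consecutive product $\prod_{m=g-k+1}^{2g-k}(1-\zeta^{2m})$. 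The delicate part is the bookkeeping of these index ranges, with exponents read modulo $2g+1$, and of the resulting signs.

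Finally I would assemble the signs: the residual $-1$ in front, the $(-1)^{i-1}$ from Knuth's formula, and the $(-1)^{g}$ produced by the cyclotomic identity combine to give exactly $(-1)^{i+g}$, and substituting the identity term by term reproduces the asserted expression for $(\tau_g)_{i,j}$. I would use the low-genus case $g=2$, $q=5$ (where one checks, e.g., $-5\zeta^2=(1-\zeta)(1-\zeta^4)(1-\zeta^2)^2$) as a consistency test on the signs and the index ranges before committing to the general bookkeeping.
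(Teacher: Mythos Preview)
Your approach is essentially identical to the paper's: both compute $\tau_g=\Omega_A^{-1}\Omega_B$, cancel the diagonal factor $D_1$, invert the Vandermonde matrix via Knuth's formula, and then simplify the scalar prefactor by the cyclotomic identity $\zeta^{k}(1-\zeta^{2k})\prod_{m\neq k}(\zeta^{2m}-\zeta^{2k})=(-1)^{g}(2g+1)\big/\prod_{m=g-k+1}^{2g-k}(1-\zeta^{2m})$; the paper derives this identity by pulling out $\zeta^{2k}$ from each factor and using $\prod_{l=1}^{2g}(1-\zeta^{l})=2g+1$, which is exactly your plan rephrased. One point worth flagging: your intermediate formula has $\sigma_{g-i}(\ldots,\widehat{\zeta^{2k}},\ldots)$ with the \emph{summation} index deleted, which is what the matrix product $(V^{-1})_{i,k}M_{k,j}$ actually gives, whereas the theorem as printed (and the paper's displayed intermediate step) has $\widehat{\zeta^{2j}}$; your version is the one produced by the computation, so be aware that the discrepancy lies in the paper's notation rather than in your argument.
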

\begin{proof}
We compute $\Omega_A^{-1}\Omega_B$ as follows:
\begin{align*}
\Omega_A^{-1}\Omega_B&=-
\left(\zeta^{2i(j-1)}\right)^{-1}
\mathop{\mathrm{diag}}\left(\zeta^{-i}\right)
\left(\sum_{k=0}^{j-1}\zeta^{2ik}\right)\\
&=-
\left(\zeta^{2i(j-1)}\right)^{-1}
\left(\zeta^{-i}\dfrac{1-\zeta^{2ij}}{1-\zeta^{2i}}\right).
\end{align*}
From the equation
\[
\left(\zeta^{2i(j-1)}\right)^{-1}=
\left((-1)^{i-1}\dfrac{\sigma_{g-i}(\zeta^2,\zeta^4,
\ldots,\widehat{\zeta^{2j}},\ldots,\zeta^{2g})}
{\prod_{m=1, m\neq j}^{n}(\zeta^{2m}-\zeta^{2k})}\right),
\]
we have for the $(i,j)$-th entry of $\tau_g$
\[
\sum_{k=1}^{g}
(-1)^{i}\dfrac{\sigma_{g-i}(\zeta^2,\zeta^4,
\ldots,\widehat{\zeta^{2j}},\ldots,\zeta^{2g})}
{\prod_{m=1, m\neq k}^{g}(\zeta^{2m}-\zeta^{2k})}
\dfrac{1-\zeta^{2kj}}{\zeta^{k}(1-\zeta^{2k})}.
\]
Moreover, we obtain
\begin{align*}
\zeta^k(1-\zeta^{2k})\prod_{m=1, m\neq k}^{g}(\zeta^{2m}-\zeta^{2k})
&=\zeta^{k+2k(g-1)+2k}(\zeta^{-2k}-1)\prod_{m=1, m\neq k}^{g}(\zeta^{2(m-k)}-1)\\
&=(-1)^g\prod_{m=-k, m\neq 0}^{g-k}(1-\zeta^{2m})\\
&=(-1)^g\dfrac{2g+1}{\prod_{m=g-k+1}^{2g-k}(1-\zeta^{2m})},
\end{align*}
for each $k=1,2,\ldots,g$.
The last equality follows from
\[
\prod_{m=-k, m\neq 0}^{g-k}(1-\zeta^{2m})\prod_{m=g-k+1}^{2g-k}(1-\zeta^{2m})
=\prod_{l=1}^{2g}(1-\zeta^{l})=2g+1.
\]
This establishes the result.
\end{proof}

Setting $\zeta=\zeta_{7}$, we calculate $\tau_3$ of $C_{7,1}$ to be
\[
\left(
\begin{array}{ccc}
 -\zeta^5 & -1-\zeta^2-\zeta^4-\zeta^5 & 1+\zeta+\zeta^3+\zeta^5 \\
 -1-\zeta^2-\zeta^4-\zeta^5 & 1+\zeta+2 \zeta^3-\zeta^4+\zeta^5 & 2+\zeta^2+\zeta^3+\zeta^5 \\
 1+\zeta+\zeta^3+\zeta^5 & 2+\zeta^2+\zeta^3+\zeta^5 & 1+\zeta^2 \\
\end{array}
\right).\]

In general, the period matrix depends only on the choice of the symplectic basis
and the complex structure of the compact Riemann surface.
Two period matrices $\tau_g$ and $\tau_g^{\prime}$ are
obtained from the same compact Riemann surface
if and only if there exists a symplectic matrix
$
\left(
 \begin{array}{cc}
  P & Q \\
  R & S
 \end{array}
\right)\in \mathrm{Sp}(2g,\Z)
$
such that $\tau_g^{\prime}=(P+\tau_g R)^{-1}(Q+\tau_g S)$.
Here $P$, $Q$, $R$, and $S$ are $g\times g$ $\Z$-coefficient matrices.

The symplectic matrix
$
\left(
 \begin{array}{cc}
  P & Q \\
  R & S
 \end{array}
\right)
$
for the two period matrices of $C_{q,1}$ with respect to
$\{a_i,b_i\}_{i=1,2,\ldots,g}$ and $\{A_i,B_i\}_{i=1,2,\ldots,g}$
is given by $({}^{t}K)^{-1}{}^{t}T$, which we denote by $H$.
This matrix can be computed as
\[
H=\left(
 \begin{array}{cc}
  P & Q \\
  R & S
 \end{array}
\right)
=
\left(
\begin{array}{cc}
O & I_g \\
-I_g & -I_g \\
\end{array}
\right)\in \mathrm{Sp}(2g,\Z).
\]
Here $I_g$ is the identity matrix of size $g$.
The equation
\[\tau_g^{\mathrm{CS}}=(O+\tau_g\cdot(-I_g))^{-1}(I_g-\tau_g)\]
gives us
\begin{prop}
The relation between the two period matrices $\tau_g$ and $\tau_g^{\mathrm{CS}}$ is 
\[
\tau_g^{\mathrm{CS}}=-\tau_g^{-1}+I_g.
\]
\end{prop}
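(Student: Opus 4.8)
The plan is to apply the general modular transformation law for period matrices, already recalled just before the statement: whenever two symplectic bases of $H_1(C_{q,1};\Z)$ are related by a matrix $\left(\begin{smallmatrix} P & Q \\ R & S \end{smallmatrix}\right)\in \mathrm{Sp}(2g,\Z)$, the two resulting period matrices $\tau_g$ and $\tau_g'$ satisfy $\tau_g' = (P+\tau_g R)^{-1}(Q+\tau_g S)$. The whole point is that the change-of-basis matrix has already been identified as $H=({}^{t}K)^{-1}\,{}^{t}T=\left(\begin{smallmatrix} O & I_g \\ -I_g & -I_g \end{smallmatrix}\right)$, so its four blocks are completely explicit, namely $P=O$, $Q=I_g$, $R=-I_g$, and $S=-I_g$. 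Thus nothing remains but to feed these blocks into the transformation law and simplify.

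First I would substitute the blocks directly. The two factors collapse to $P+\tau_g R = -\tau_g$ and $Q+\tau_g S = I_g-\tau_g$, so that
\[
\tau_g^{\mathrm{CS}}=(-\tau_g)^{-1}(I_g-\tau_g).
\]
Distributing the inverse over the difference then gives $\tau_g^{\mathrm{CS}}=-\tau_g^{-1}+\tau_g^{-1}\tau_g=-\tau_g^{-1}+I_g$, which is exactly the asserted identity. Every step here is an identity of $g\times g$ complex matrices, so the computation is purely routine once $H$ is in hand.

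The only point needing a word of justification, as opposed to bookkeeping, is that $\tau_g$ is invertible, so that $-\tau_g^{-1}$ is defined and the factor $(P+\tau_g R)^{-1}=(-\tau_g)^{-1}$ makes sense. This is automatic for any period matrix: by the facts recorded in the Introduction, $\tau_g$ is symmetric with positive definite imaginary part, and a nonzero vector $w$ with $\tau_g w=0$ would force ${}^{t}\overline{w}\,(\mathrm{Im}\,\tau_g)\,w=\mathrm{Im}\!\left({}^{t}\overline{w}\,\tau_g\,w\right)=0$, contradicting positive definiteness of $\mathrm{Im}\,\tau_g$. Granting this, there is no genuine obstacle: the substantive work, namely computing $H$ and checking that it lies in $\mathrm{Sp}(2g,\Z)$, has already been carried out above, and the proof reduces to the one-line matrix manipulation just described.
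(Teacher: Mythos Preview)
Your argument is correct and is essentially identical to the paper's own proof: both substitute the blocks $P=O$, $Q=I_g$, $R=-I_g$, $S=-I_g$ of $H$ into the modular transformation law $\tau_g^{\mathrm{CS}}=(P+\tau_g R)^{-1}(Q+\tau_g S)$ and simplify $(-\tau_g)^{-1}(I_g-\tau_g)$ to $-\tau_g^{-1}+I_g$. Your added remark on the invertibility of $\tau_g$ is a harmless extra justification that the paper leaves implicit.
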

In particular, for $g=3$, we have $\tau_{7,1}=-\tau_3^{-1}+I_3$.
Nevertheless, the period matrix $\tau_g^{\mathrm{CS}}$ is complicated.

We introduce Schindler's period matrix, denoted by $\tau^{S}_{g}$,
for the hyperelliptic curve defined by the affine equation
$w_1^2=z_1(z_1^{2g+1}-1)$, and here denoted $C^{\prime}_{q,1}$.
This curve is biholomorphic to $C_{q,1}$.
For $i=1,2,\ldots,g$, elements $t_{i}$ of the $q$-th cyclotomic field $\Q(\zeta)$
are defined as follows:
\[
\left.
 \begin{array}{rcll}
  t_1&=&(-1)^g\zeta^{g^2},\\
  t_2&=&t_{1}\left(1-\dfrac{1}{1+\zeta}\right),\\
  t_{i+1}&=&\dfrac{t_{1}(1-\sum_{k=2}^{i}\zeta^{g-i+k-1}t_{k}t_{i-k+2})}{1+\zeta^{-i}}
  & (i=2,3,\ldots,g-1).
 \end{array}
\right.
\]
\begin{thm}[Schindler \cite{0801.14008}]
The $(i,j)$-th entry of the period matrix $\tau^{S}_g$ is obtained by
\begin{center}
$s_{i,j}=1-\dfrac{1}{t_1}\displaystyle\sum_{k=1}^{i}t_kt_{j-i+k}$
\end{center}
for $1\leq i\leq j\leq g$ and $s_{j,i}$ for $g\geq i>j\geq 1$.
\end{thm}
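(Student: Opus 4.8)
The plan is to re-run the period computation of Sections \ref{Period matrix} and \ref{Period matrices for hyperelliptic curves} on the model $C'_{q,1}\colon w_1^2=z_1(z_1^{2g+1}-1)$, and only at the very end to reconcile the resulting closed form with Schindler's recursively defined elements $t_i$. First I would fix the order-$(2g+1)$ symmetry $\rho(z_1,w_1)=(\zeta z_1,\mu w_1)$ with $\zeta=\zeta_q$ and $\mu^2=\zeta$; since $\rho^{*}(z_1^{i-1}dz_1/w_1)=\mu^{2i-1}\,z_1^{i-1}dz_1/w_1$, the differentials $\eta'_i=z_1^{i-1}dz_1/w_1$ for $i=1,\dots,g$ — a basis of $H^{1,0}(C'_{q,1})$ by Bennama \cite{0931.14018} — are eigenforms of $\rho^{*}$, exactly as the $\omega_i$ are eigenforms of $\sigma^{*}$ in Lemma \ref{period}. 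Building the loops $c'_j=I_0\cdot(\rho_\ast)^{j}I_0^{-1}$ as in Section \ref{Two symplectic basis of a plain curve} and using the eigenform property, I expect each period $\int_{c'_j}\eta'_i$ to factor as a single Beta-function value times a difference $1-\mu^{(2i-1)j}$ of roots of unity, in complete analogy with Lemma \ref{period}; normalizing by the constants then makes the period vector fully explicit.

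With a natural-type symplectic basis $\{A'_i,B'_i\}$ on $C'_{q,1}$ in hand, I would assemble the two $g\times g$ matrices $\Omega'_A$ and $\Omega'_B$ precisely as in the proposition computing $\Omega_A$ and $\Omega_B$. The key structural point, inherited from the eigenform property, is that $\Omega'_A$ equals, up to left multiplication by a diagonal matrix, a Vandermonde matrix in powers of $\zeta$. I would then invert it by Knuth's lemma on the inverse of a Vandermonde matrix \cite{0191.17903}, obtaining entries that are ratios of the elementary symmetric polynomials $\sigma_{g-i}$ to Vandermonde products, just as in the proof of our formula for $\tau_g$. Forming $\tau^{S}_g=(\Omega'_A)^{-1}\Omega'_B$ and clearing the Vandermonde denominators through the cyclotomic identity $\prod_{l=1}^{2g}(1-\zeta^{l})=2g+1$ used above then yields a fully explicit symmetric-polynomial expression for each $s_{i,j}$.

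The hard part will be recognizing this closed form as Schindler's convolution $s_{i,j}=1-\frac{1}{t_1}\sum_{k=1}^{i}t_k\,t_{j-i+k}$. Schindler's description is genuinely recursive — each $t_{i+1}$ is defined through $t_2,\dots,t_i$ — whereas the Vandermonde inversion produces a non-recursive symmetric-function entry, so bridging the two is the crux. The plan is to introduce the generating series $T(x)=\sum_{i\ge 1}t_i x^{i}$, translate the recursion into a Riccati-type functional equation for $T$, and show that the coefficients of $T(x)^2$ reproduce exactly the convolution sums $\sum_k t_k t_{j-i+k}$ occurring in $s_{i,j}$; matching these against the symmetric-polynomial entries of $(\Omega'_A)^{-1}\Omega'_B$ should reduce to a Newton-type identity among the $\sigma_{g-i}(\zeta^2,\dots,\widehat{\zeta^{2j}},\dots,\zeta^{2g})$, which I would verify by induction on $i$. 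This is where the real work lies.

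A cleaner alternative, available because $C'_{q,1}$ and $C_{q,1}$ are biholomorphic, is to avoid a fresh period computation entirely: any period matrix of $C'_{q,1}$ must equal $(P+\tau_g R)^{-1}(Q+\tau_g S)$ for some $\left(\begin{smallmatrix}P&Q\\R&S\end{smallmatrix}\right)\in\mathrm{Sp}(2g,\Z)$, exactly the device already used to pass between $\tau_g$ and $\tau^{\mathrm{CS}}_g$. I would determine this symplectic matrix by tracking the natural-type cycles of $C_{q,1}$ through the explicit change of variables relating the two affine equations, substitute the known formula for $\tau_g$, and check that the transformed matrix has $(i,j)$-entry equal to $s_{i,j}$. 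This trades the generating-function matching for a linear-algebra verification, but it still terminates in the same recursion-versus-closed-form reconciliation, which I expect to remain the principal difficulty either way.
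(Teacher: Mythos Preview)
This theorem is not proved in the paper at all: it is quoted verbatim from Schindler \cite{0801.14008} and attributed to him in the theorem heading. The paper's role here is only to \emph{state} Schindler's result and then relate it to the newly computed $\tau_g$ via the symplectic change of basis $\tau^{S}_g = L_g \tau_g L_g$. So there is no ``paper's own proof'' to compare your proposal against.

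Your plan is therefore aimed at the wrong target. If the goal is to reproduce the paper, you should simply cite Schindler for this statement and move on. If instead you want an independent derivation of Schindler's formula, your second alternative is essentially what the paper already does \emph{after} stating the theorem: track the explicit biholomorphism $z_1=1/z$, $w_1=\sqrt{-1}\,w/z^{g+1}$, read off the induced symplectic matrix $\left(\begin{smallmatrix}L_g&O\\O&L_g\end{smallmatrix}\right)$, and obtain $\tau^{S}_g=L_g\tau_g L_g$ directly from the known $\tau_g$. That gives a closed form for $s_{i,j}$ without ever touching the $t_i$ recursion; the ``recursion-versus-closed-form reconciliation'' you flag as the principal difficulty is precisely what the paper avoids by treating Schindler's recursive description as input rather than output.
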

If we set $z_1=1/z$ and $w_1=\sqrt{-1}\, w/z^{g+1}$,
we obtain the biholomorphism from $C^{\prime}_{q,1}$ to $C_{q,1}$. 
This implies that a symplectic basis for Schindler's period matrix \cite{0801.14008}
is given by
\[
(A_g,A_{g-1},\ldots,A_1,B_g,B_{g-1},\ldots,B_1)
=(A_1,A_2,\ldots,A_g,B_1,B_2,\ldots,B_g)
\left(
 \begin{array}{cc}
  L_g & O \\
  O & L_g 
 \end{array}
\right),
\]
using the symplectic basis of natural type.
Here the $(i,j)$-th entry of the $g\times g$ matrix $L_g$
is $1$ for $i+j=g+1$ and $0$ otherwise.
It immediately follows that $L_{g}^{-1}=L_g$ and 
$\left(
 \begin{array}{cc}
  L_g & O \\
  O & L_g 
 \end{array}
\right)\in \mathrm{Sp}(2g,\Z)$.
From the equation
\[
\tau_g^{S}=(L_g+\tau_g\cdot O)^{-1}(O+\tau_g L_g),
\]
we have
\begin{prop}
The relation between the two period matrices $\tau_g$ and $\tau^{S}_g$ is 
\[
\tau^{S}_{g}=L_{g}\tau_{g}L_{g}.
\]
\end{prop}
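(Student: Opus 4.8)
The plan is to obtain the stated identity by specializing the general transformation law for period matrices under a symplectic change of homology basis to the change-of-basis matrix induced by the biholomorphism $C^{\prime}_{q,1}\to C_{q,1}$. All the geometric input has been assembled in the lines preceding the proposition, so the argument reduces to a single substitution followed by an elementary simplification.

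First I would record that, since $C^{\prime}_{q,1}$ and $C_{q,1}$ are biholomorphic, the transformation law for period matrices of one and the same Riemann surface applies. The biholomorphism $z_1=1/z$, $w_1=\sqrt{-1}\,w/z^{g+1}$ carries Schindler's symplectic basis on $C^{\prime}_{q,1}$ to the basis $(A_g,A_{g-1},\ldots,A_1,B_g,B_{g-1},\ldots,B_1)$ of $H_1(C_{q,1};\Z)$. Hence $\tau^S_g$ is precisely the period matrix of $C_{q,1}$ computed against this reordered basis, and the reordered basis is obtained from the natural-type basis $\{A_i,B_i\}_{i=1,2,\ldots,g}$ by right multiplication by $\left(\begin{smallmatrix} L_g & O \\ O & L_g \end{smallmatrix}\right)$, which lies in $\mathrm{Sp}(2g,\Z)$ as noted above.

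Next I would invoke the transformation law itself: if a symplectic matrix $\left(\begin{smallmatrix} P & Q \\ R & S \end{smallmatrix}\right)$ relates two symplectic bases, then the associated period matrices satisfy $\tau^{\prime}=(P+\tau_g R)^{-1}(Q+\tau_g S)$. Taking $(P,Q,R,S)=(L_g,O,O,L_g)$ gives
\[
\tau^S_g=(L_g+\tau_g\cdot O)^{-1}(O+\tau_g L_g)=L_g^{-1}\tau_g L_g.
\]
Because $L_g$ is the anti-diagonal permutation matrix with unit entries, it is an involution, $L_g^{-1}=L_g$, and the desired identity $\tau^S_g=L_g\tau_g L_g$ follows at once.

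The computation is immediate once the change-of-basis matrix is in hand; the only step deserving care is the identification in the first paragraph — namely that $z\mapsto 1/z$ simultaneously reverses the indexing of the $A$-cycles and of the $B$-cycles while preserving the symplectic pairing, so that the induced matrix is exactly $\left(\begin{smallmatrix} L_g & O \\ O & L_g \end{smallmatrix}\right)$ rather than some other permutation or a matrix mixing the $A$- and $B$-blocks. I would confirm this by tracking the action of $z\mapsto 1/z$ on the branch points $\{1,\zeta,\ldots,\zeta^{2g},\infty\}$, whose order-reversing permutation is what forces the block $L_g$; the symplectic property of the resulting block matrix then reduces to the orthogonality $L_g{}^{t}L_g=I_g$, which holds since $L_g$ is a symmetric permutation matrix and has already been recorded above.
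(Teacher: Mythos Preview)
Your proof is correct and follows essentially the same approach as the paper: apply the symplectic transformation law with $(P,Q,R,S)=(L_g,O,O,L_g)$ to obtain $\tau^S_g=L_g^{-1}\tau_g L_g$, then use $L_g^{-1}=L_g$. The paper records the displayed equation $\tau_g^{S}=(L_g+\tau_g\cdot O)^{-1}(O+\tau_g L_g)$ immediately before the proposition and leaves the simplification implicit, just as you do.
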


\noindent
{\bf Acknowledgements.}
The author would like to thank Nariya Kawazumi and Takashi Taniguchi for their useful comments.
He also would like to thank the referee for valuable comments.
This work was partially supported by JSPS Grant-in-Aid for Young Scientists(B) 25800053 and
Fellowship for Research Abroad of Institute of National Colleges of Technology.
The work was performed while staying at the Danish National Research Foundation Centre of Excellence, QGM (Centre for Quantum Geometry of Moduli Spaces) in Aarhus University.
He is very grateful for the warm hospitality of QGM.

\providecommand{\bysame}{\leavevmode\hbox to3em{\hrulefill}\thinspace}
\providecommand{\MR}{\relax\ifhmode\unskip\space\fi MR }
\providecommand{\MRhref}[2]{%
  \href{http://www.ams.org/mathscinet-getitem?mr=#1}{#2}
}
\providecommand{\href}[2]{#2}

\end{document}